\pgfplotsset{compat=1.18} 
\def\RR{\mathbb{R}}
\def\NN{\mathbb{N}}
\def\HH{\mathbf{H1}}
\def\HHCS{\mathbf{H1CS}}
\def\hh{\mathbf{H2}}
\def\hhcs{\mathbf{H2CS}}
\def\MM{{\bf M}}
\def\x{\bm{x}}
\def\vb{\bm{v}}
\def\z{\bm{z}}
\def\p{\bm{p}}
\def\y{\bm{y}}
\def\q{\bm{q}}
\def\balpha{\bm{\alpha}}
\def\bbeta{\bm{\beta}}
\def\bgamma{\bm{\gamma}}
\DeclareMathOperator{\supp}{supp}
\DeclareMathOperator{\cs}{cs}
\definecolor{amaranth}{rgb}{0.9, 0.17, 0.31}
\definecolor{darkgreen}{rgb}{0.0, 0.5, 0.0}
\definecolor{darkorange}{rgb}{1.0, 0.4, 0.0}
\renewcommand{\arraystretch}{1.75} 
\def\vv{\bm{v}}
\newtheorem{thm}{Theorem}[section]
\newtheorem{prop}{Proposition}[section]
\newtheorem{rem}[thm]{Remark}
\newtheorem{assumption}[thm]{Assumption}
\newtheorem{exmp}[thm]{Example}
\newcommand{\norm}[1]{\lVert#1\rVert}                   
\newcommand{\abs}[1]{\lvert#1\rvert}                    
\newcommand{\set}[1]{\left\{#1\right\}}                 
\title[Leveraging CD Kernels to Strengthen Moment-SOS Relaxations]{Leveraging Christoffel-Darboux Kernels to Strengthen Moment-SOS Relaxations}
\author{Sre{\'c}ko  {\DH}ura\v{s}inovi{\'c}$^{1,2}$}
\email{SRECKO001@e.ntu.edu.sg}
\author{Perla Azzi$^{1}$}
\email{perla.azzi@cnrsatcreate.sg}
\author{Jean-Bernard Lasserre$^{3,4}$}
\email{lasserre@laas.fr}
\author{Victor Magron$^{3,5}$}
\email{vmagron@laas.fr}
\author{Olga Mula$^{6}$}
\email{o.mula@tue.nl}
\author{Jun Zhao$^{2}$}
\email{junzhao@ntu.edu.sg}
\address[1]{CNRS@CREATE LTD, Singapore}
\address[2]{College of Computing and Data Science, Nanyang Technological University}
\address[3]{LAAS-CNRS}
\address[4]{Toulouse School of Economics}
\address[5]{Institute of Mathematics from Toulouse}
\address[6]{Eindhoven University of Technology, Department of Mathematics and Computer Science}
\begin{document}

\begin{abstract}
The classical Moment-Sum Of Squares hierarchy allows to approximate a global minimum of a polynomial optimization problem through semidefinite relaxations of increasing size. 
However, for many optimization instances, solving higher order relaxations becomes impractical or even impossible due to the substantial computational demands they impose. 
To address this, existing methods often exploit intrinsic problem properties, such as symmetries or sparsity.  
Here, we present a complementary approach, which enhances the accuracy of computationally more efficient low-order relaxations by leveraging Christoffel-Darboux kernels. 
Such strengthened relaxations often yield significantly improved bounds or even facilitate minimizer extraction. 
We illustrate the efficiency of our approach on several classes of important quadratically constrained quadratic programs.
\end{abstract}
\maketitle
\thispagestyle{empty}


\begin{scriptsize}
  \setcounter{tocdepth}{2}
  \tableofcontents
\end{scriptsize}

\section{Introduction}
\label{sec:intro}

\noindent
Polynomial optimization revolves around {minimizing or maximizing} a polynomial over a basic closed semialgebraic set, i.e., a set corresponding to the conjunction of finitely many polynomial equalities and inequalities. Generally, these problems are NP-hard \cite{laurent2009sums}.  
However, in \cite{lasserre2001global}, the author provided a way of approximating the global optimum of a polynomial optimization problem (POP) by a hierarchy of semidefinite programming (SDP) relaxations. Finite convergence guarantees, under mild conditions, were later derived in \cite{FiniteCO-Nie2014}.
We {refer the reader to} 
\cite{Henrion-book2020,Magron-book2023,Nie-book2023} and references therein for a detailed analysis of the hierarchy and its use in real-life applications. 
\\
The main bottleneck of the proposed hierarchy, often called \textit{Moment-Sum of Squares} (Moment-SOS for short) or \textit{Lasserre's} hierarchy, is its scalability. 
Indeed, this hierarchy involves semidefinite positive constraints on matrices of size up to $\binom{n+d}{d}$, where $n$ is the ambient space dimension and $d$ is the relaxation order, rendering it intractable if either of these parameters is large. 
While it is true that $\binom{n+d}{d} = \mathcal{O}(n^d)$ for a fixed $n$, the curse of dimensionality becomes apparent when attempting to solve higher order relaxations with state-of-the-art SDP solvers.
Fortunately, many POP instances possess intrinsic structural properties whose adequate exploitation can lead to an increase in computational efficiency and mitigate the curse of dimensionality.\\
For example, more efficient relaxations that exploit \textit{symmetries} were introduced in \cite{symmetry-Reiner2013}. Furthermore, detecting and taking advantage of the problem's \textit{sparsity} constitutes another important direction in this domain, where some of the commonly studied sparsity patterns are \textit{correlative sparsity} \cite{waki2006sums,LasserreCSP2006}, \textit{term sparsity} \cite{TS-Wang2019, TSSOS-Wang2021} and \textit{ideal sparsity} \cite{Ideal-Korda2024}.
\\
Alternatively, one may try to identify some additional properties of the problem and translate them into new constraints. These new constraints, when coupled with the initial ones, often produce tighter relaxation bounds at each step of the hierarchy. 
For instance, strengthening the first-order moment relaxations by adding redundant quadratic constraints has been studied in \cite{yang2024bnn}. 
In \cite{Tight-Nie2019}, the author derives additional constraints involving \textit{Lagrange multipliers} associated to the original problem. These multipliers are represented as polynomials in the initial decision variables, and the proposed approach is demonstrated to provide better relaxation bounds whenever the optimal value is attained at a critical point of the associated Lagrangian. Combining correlative sparsity and Lagrange multiplier expressions is presented in \cite{Tight+CD-Qu2024}. 
In \cite{ghaddar2016dynamic}, the authors propose a dynamic inequality generation scheme to construct valid polynomial inequalities. 
Strengthened relaxation bounds using \textit{shift} and \textit{multiplication} operators, which arise when one tries to express optimality conditions of a given POP in a  positive semidefinite manner, are obtained in \cite{Strengthening-wang2024}. 
One common drawback of the last two above-mentioned approaches is that they require to solve auxiliary relaxations at higher orders. 
\hfill\break\\
{\bf Contribution.} We introduce a complementary approach leveraging Christoffel-Darboux kernels.
Recent works, such as \cite{SortingOut-Pauwels2016,ChristoffelBook-Lasserre2022}, have demonstrated the broader applicability of these kernels, particularly in addressing data analysis challenges, paving the way for their use in optimization. In particular, results from \cite{Lasserre-CDK2024_NumALG}, reveal a very deep connection between Christoffel-Darboux kernels and SOS  polynomials, which are one of the main building blocks of the Moment-SOS hierarchy. 
Our main contribution lies in exploiting these insights for strengthening classical hierarchy bounds, namely:
\begin{itemize}
    \item We argue that utilizing the Christoffel-Darboux kernel constructed at a fixed level $d$ of the hierarchy enables the construction of specific polynomial constraints that effectively restrict the feasible set. This approach yields significant improvements in bounds without incurring the computational cost of solving often intractable higher-order relaxations.  
    Furthermore, we present and analyze the properties of two distinct heuristics for constructing these additional constraints;
    \item We demonstrate the effectiveness of our approach on a variety of quadratically constrained quadratic programs (QCQPs) across different dimensions.
\end{itemize}
\noindent
The rest of the paper is structured as follows: in Section \ref{sec:POP}, we recall some important properties of the main ingredients used in this paper: the Moment-SOS hierarchy and the Christoffel-Darboux kernels. In Section \ref{sec:Heuristics}, we introduce two different heuristic methods based on Christoffel-Darboux kernels, aimed at strengthening the classical relaxation bounds. Finally, in Section \ref{sec:num_exp}, we illustrate the performance of both heuristics on various classes of QCQPs and provide guidelines on how to use these heuristics in practice for more general problems.

\subsection{Notation} We use Roman letters to denote scalars, and boldfaced letters to represent vectors and matrices.
Let $\RR[\x]=\RR[x_1,\dotsc,x_n]$ be the ring of real $n$-variate polynomials, where $\bm{x}=(x_1,\dotsc,x_n)$. 
For $d\in \NN$, we denote by $\RR_d[\x]$  the finite-dimensional space of polynomials of degree less than $d$, equipped with a standard monomial basis $\vb_d:=(1,x_1,\dots,x_n,x_1^2,\dots,x_n^d)$ of size $s(n,d):=\binom{n+d}{d}$. A positive semidefinite (PSD) matrix ${\bf M}$ is denoted by ${\bf M}\succeq 0$, and the set of symmetric (resp.~symmetric PSD) matrices of size $n$ is denoted  by $\mathcal{S}^n$ (resp. $\mathcal{S}^n_+$).

\noindent
Given $(d,n)\in \NN^2$, we define the multi-index set $\NN^n_d:=\set{\balpha=(\alpha_i,\dots,\alpha_n)\in \NN^n \mid \sum_{i=1}^n \alpha_i\leq d}$. 
Consequently, given $\x\in\RR^n$ and  $f\in \RR_{d}[\x]$, we will often write $f(\x)=\sum_{\balpha\in \NN^n_{d}} f_{\balpha} \x^{\balpha}$ with $f_{\balpha}\in \RR$, $\x^{\balpha}=x_1^{\alpha_1}\dotsc x_n^{\alpha_n}$. The support of $f$ is $\supp(f):=\set{\balpha \in \NN^n ~\mid~ f_{\balpha} \neq 0}$.  

\noindent
The set of real-valued sequences indexed by elements of a set $S$ is denoted by $\RR^S$. The set of finite signed Borel measures (resp. positive finite Borel measures) supported on a set $K$ is denoted by $\mathcal{M}(K)$ (resp. $\mathcal{M}_+(K)$). We denote by $\mathbbm{1}_S$ the indicator function of the set $S$.

\section{Main ingredients}\label{sec:POP}
\subsection{The classical Moment-SOS hierarchy}

Let $f\in\RR[\x]$ and $m\in \NN^*$ such that $g_1,\dotsc,g_m \in \RR[\x]$. Let $K$ be a basic closed semialgebraic set defined by 
\begin{equation}
\label{eq:feasible set}
    K:=\set{\x\in \RR^n \mid\ g_1(\x)\geq 0,\dotsc,g_m(\x)\geq 0}.
\end{equation}
Let us consider the general Polynomial Optimization Problem (POP) of the form: 
\begin{equation}
\label{eq:pop}
    {\bf P}: \quad f_{\min}=\inf_{\x\in K} f(\x).
\end{equation}
 Optimization problems of this type are generally nonlinear and nonconvex, which makes them very difficult to solve. However, one effective tool for tackling these problems is the Lasserre's hierarchy \cite{lasserre2001global}. 
 The essence of Lasserre's hierarchy lies in transforming the initial problem into a hierarchy of finite-dimensional primal-dual semidefinite programming (SDP) problems. 
 The primal problem revolves around characterizing moment sequences of Borel measures, forming a \textit{moment problem}. On the other hand, the dual problem involves describing positive polynomials that admit weighted sum of squares (SOS) decompositions, where the weights are the polynomials $g_j$ involved in the description of $K$. \\
 We recall that a polynomial $f\in\RR_{2d}[\x]$ is said to be SOS if there exist $k\in\NN^{*}$ and $p_1,\dots,p_k\in\RR_d[\x]$ such that $f=p_1^2+\cdots+p_k^2$. Moreover, if $f$ is SOS, then there exists a matrix ${\bf G}\in\mathcal{S}_+^{s(n,d)}$, called \textit{Gram matrix},  such that $f=\vb_d^\top{\bf G}\vb_d$.
\hfill\break\\
Let $\y=(y_{\balpha})_{\balpha\in\NN^n}$ be a real-valued sequence. We can always associate to $\y$ the linear \emph{Riesz functional} $L_{\y}$ defined as follows:
\begin{align}
  L_{\y}: \RR[\x]\ni f\ & \mapsto L_{\y}(f):=\sum_{\balpha \in \supp(f)} f_{\balpha} y_{\balpha}\in  \RR \notag.
\end{align}
For $d\in \NN$, the symmetric matrix ${\bf M}_d(\y)$ of size $s(n,d)$ is called the \emph{pseudo-moment matrix} of order $d$ associated with $\y$, and its entry $(\balpha,\bbeta)$ is given by:
\begin{equation}
    {\bf M}_d (\y){(\balpha,\bbeta)}:=L_{\y}( \x^{\balpha}\x^{\bbeta}) = y_{\balpha+\bbeta} , \quad  \balpha, \bbeta \in \NN^n_d.
\end{equation}
Similarly, for $g\in \RR[\x]$, the symmetric matrix ${\bf M}_d(g\y)$ represents the \emph{localizing matrix} of order $d$ associated with $g$ and $\y$, defined as follows:
\begin{equation}
    {\bf M}_d (g\y){(\balpha,\bbeta)}:=L_{\y}(g \x^{\balpha}\x^{\bbeta}) = \sum_{\bgamma\in\supp(g)} g_{\bgamma}y_{\balpha+\bbeta+\bgamma} , \quad  \balpha, \bbeta \in \NN^n_{d-\lceil\operatorname{deg}(g)/2\rceil}.
\end{equation}
In particular, the localizing matrix of order $d$ associated with the constant polynomial $g_0=1$ is the moment matrix of order $d$. \\
Casting the POP as a moment problem follows from the following observation:
\begin{align}\label{eq: popasmom}
\begin{split}
        f_{\min} &= \inf_{\mu \in \mathcal{M}_+(K)} \set{\int_{\mathbb{R}^n} f(\x) \, d\mu(\x) {\: \big| \: \mu(K)=1}} \\
    &= \inf_{\y \in \RR^{\NN^n}} \left\{ 
    L_{\y}(f) \: \big| \: \exists\mu\in\mathcal{M}_+(K),\: {y_{\bf 0} = 1, \: y_{\balpha}=\int_{\RR^n}\x^{\balpha} \,d\mu(\x), \balpha\in\NN^n\setminus\set{\bf 0}}\right\}.
    \end{split}
\end{align}
Indeed, suppose that $\x^{*}\in\RR^n$ is a global minimizer of $f$ over $K$, and consider the Dirac measure $\mu^{*}=\delta_{\x^{*}}$. Then, $\mu^{*}$ is feasible for the optimization problem over measures in \eqref{eq: popasmom}, and $\displaystyle \int_{\mathbb{R}^n} f(\x) \, d\mu^{*}(\x)=f(\x^{*})=f_{\min}$, implying that $\displaystyle\inf_{\mu \in \mathcal{M}_+(K)} \set{\int_{\mathbb{R}^n} f(\x) \, d\mu(\x)\: \big| \: \mu(K)=1}\leq f_{\min}$. Conversely, let $\mu$ be feasible in \eqref{eq: popasmom}. Since for all $\x\in K,\: f(\x)\geq f_{\min}$, we obtain $\displaystyle\int_{\mathbb{R}^n} f(\x) \, d\mu(\x)\geq\int_{\mathbb{R}^n} f_{\min} \, d\mu(\x)=f_{\min}$, implying that $\displaystyle\inf_{\mu \in \mathcal{M}_+(K)} \set{\int_{\mathbb{R}^n} f(\x) \, d\mu(\x)\: \big| \: \mu(K)=1}\geq f_{\min}$. Equality between $f_{\min}$ and the sequence-based reformulation in \eqref{eq: popasmom} can be established in a similar way.
\\
The measure $\mu$ in the last line of expression \eqref{eq: popasmom} is referred to as the \textit{representing measure} of $\y$. This  reformulated optimization problem, while linear in $\y$, remains infinite-dimensional because it requires all possible moments of the measure $\mu$ to match the elements of the sequence $\y$. 
However, the next result provides a way to relax this problem into a finite-dimensional one. 
For each $j \in  \{1,\dots, m\}$, let us define $d_j:=\lceil\deg(g_j)/2\rceil$ and $d_{\min} :=\max\set{\lceil \deg(f)/2\rceil,d_1,\dotsc,d_m}$.
\begin{prop}\label{prop: reprmeas}
    If $ \y \in \RR^{\NN^n_{2d}}$ is represented by a measure supported on the set $K$, then
\begin{enumerate}
    \item $ \MM_d(\y) \in \mathcal{S}_+^{s(n,d)}$,
    \item $ \MM_{d-d_j}(g_j \y) \in \mathcal{S}_+^{s(n, d-d_j)},\:  j \in  \{1,\dots, m\}  $.
\end{enumerate}
\end{prop}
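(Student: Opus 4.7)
The plan is to verify each semidefiniteness claim by computing the quadratic form associated with the matrix against an arbitrary test vector, rewriting the resulting scalar as an integral of a pointwise-nonnegative function against the representing measure $\mu$. The essential mechanism is that, by construction, the pseudo-moment and localizing matrices are precisely the Gram representations (in the monomial basis) of the bilinear forms $(p,q) \mapsto L_{\y}(pq)$ and $(p,q) \mapsto L_{\y}(g_j p q)$, so quadratic forms against them become Riesz-functional evaluations on squares.

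For part (1), I would take an arbitrary $\vv \in \RR^{s(n,d)}$ and identify it with the coefficient vector of a polynomial $p \in \RR_d[\x]$ via the monomial basis $\vb_d$, so that $p(\x) = \vv^\top \vb_d(\x)$. Expanding the product yields
\[
\vv^\top \MM_d(\y)\vv \;=\; \sum_{\balpha,\bbeta \in \NN^n_d} v_{\balpha} v_{\bbeta}\, y_{\balpha+\bbeta} \;=\; L_{\y}(p^2).
\]
Since $\y$ is represented by $\mu \in \mathcal{M}_+(K)$, linearity of $L_{\y}$ on $\operatorname{supp}(p^2) \subseteq \NN^n_{2d}$ (the range for which $\y$ is defined) gives $L_{\y}(p^2) = \int_{\RR^n} p(\x)^2 \, d\mu(\x) \geq 0$. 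As $\vv$ was arbitrary, this proves $\MM_d(\y) \succeq 0$.

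For part (2), I would repeat the argument but with $\vv \in \RR^{s(n,d-d_j)}$ identified with $p \in \RR_{d-d_j}[\x]$. Using the definition of the localizing matrix together with linearity of $L_{\y}$,
\[
\vv^\top \MM_{d-d_j}(g_j\y)\vv \;=\; L_{\y}(g_j p^2) \;=\; \int_{\RR^n} g_j(\x)\, p(\x)^2 \, d\mu(\x).
\]
Here the degree bookkeeping matters: $\deg(g_j p^2) \leq 2d_j + 2(d-d_j) = 2d$, so the Riesz functional is indeed evaluated on monomials within the range $\NN^n_{2d}$ on which $\y$ is specified. Since $\operatorname{supp}(\mu) \subseteq K$ and $g_j \geq 0$ on $K$, the integrand is nonnegative $\mu$-almost everywhere, yielding nonnegativity of the integral and hence $\MM_{d-d_j}(g_j\y) \succeq 0$.

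The main obstacle is really just the careful indexing: one must check that the integer $d-d_j$ is chosen precisely so that $\deg(g_j p^2) \leq 2d$, ensuring that the Riesz evaluation is well-defined on the given truncation $\y \in \RR^{\NN^n_{2d}}$. Beyond this bookkeeping, the argument reduces to the elementary observation that integrating a nonnegative polynomial against a positive measure yields a nonnegative number, so no deeper result (e.g., Haviland or Putinar) is needed for this direction.
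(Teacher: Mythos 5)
Your proposal is correct and follows essentially the same route as the paper's proof: identify an arbitrary test vector with a polynomial $p$, rewrite the quadratic form as $L_{\y}(p^2)$ (resp.\ $L_{\y}(g_j p^2)$), and conclude nonnegativity by integrating against the representing measure supported on $K$. The extra degree bookkeeping you include is a harmless (and welcome) elaboration of what the paper leaves implicit.
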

\begin{proof}
    Let $\mu\in\mathcal{M}_+(K)$ such that $\displaystyle y_{\balpha}=\int_K\x^{\balpha}d\mu(\x)$, for all $\balpha\in\NN^n_{2d}$. Then, for any $\p\in\RR^{s(n,d)}$,
    \begin{align} \p^\top\MM_d(\y)\p=L_{\y}(p^2)=\int_K p^2(\x)d\mu(\x)\geq0 \implies \MM_d(\y) \in \mathcal{S}_+^{s(n, d)}.
    \end{align}
    Similarly, for any $j\in\set{1,\dots,m}$, we have $\MM_{d-d_j}(g_j \y) \in \mathcal{S}_+^{s(n,d-d_j)}$ since for any $\p\in\RR^{s(n,d-d_j)}$,
    \begin{align} \p^\top\MM_{d-d_j}(g_j \y)\p=L_{\y}(p^2g_j)=\int_K p^2(\x)g_j(\x)d\mu(\x)\geq0.
    \end{align}\end{proof}
\noindent
Proposition~\eqref{prop: reprmeas} facilitates constructing a hierarchy of moment relaxations for problem {\bf P}~\cite{lasserre2001global}.
Namely, the \emph{moment relaxation} for the POP \eqref{eq:pop} indexed by the \emph{relaxation order} $d\geq d_{\min}$ is defined as the following SDP:
\begin{align}
    \label{eq:primal}
    {\bf P}_d^{\operatorname{mom}}: \quad
    \begin{cases}
          \displaystyle f_d^{\operatorname{mom}}:=\inf_{\y \in \RR^{\NN^n_{2d}}}\quad & L_{\y}(f)\\
        \text{s.t.} \quad &{\bf M}_d(\y) \in \mathcal{S}_+^{s(n,d)},\\
        &  {\bf M}_{d-d_j}(g_j\y) \in \mathcal{S}_+^{s(n, d-d_j)}, \quad j\in \{1,\dotsc,m\},\\
        & y_{\mathbf 0}=1.
    \end{cases}
\end{align}
The dual formulation of the {infinite-dimensional linear program} \eqref{eq: popasmom} can be obtained by interpreting $f_{\min}$ as the largest possible lower bound of $f$ over $K$, namely
\begin{equation}\label{eq: global-sup}
    f_{\min}=\sup_{\lambda\in\RR}\set{\lambda\: \mid\:  \forall \x \in K, f(\x)-\lambda\geq 0},
\end{equation}
which is also a linear program. 
However, the set of nonnegative polynomials over the set $K$ does not admit any known and computationally efficient representations. 
Thanks to Putinar's representation theorem \cite{Putinar1993}, one can circumvent this difficulty by requiring $f-\lambda$ to admit a specific weighted SOS representation, namely $f- \lambda = \sum_{j=0}^m \sigma_j g_j$, with each $\sigma_j$ being an SOS polynomial. 
The latter condition can also be verified via SDP, yielding the dual formulation of \eqref{eq:primal}, and is called the \textit{SOS relaxation} of order $d\geq d_{\min}$:
\begin{align}
\label{eq:dual}
    {\bf P}_d^{\operatorname{sos}}: \quad 
    \begin{cases}
       \displaystyle f_d^{\operatorname{sos}}:=\sup_{\lambda\in\RR,{\bf G}_j} \quad & \lambda \\
        \text{s.t.} \quad &f-\lambda= \vb_{d}^\top {\bf G} \vb_{d} +  \sum_{j=1}^m \vb_{d-d_j}^\top {\bf G}_j \vb_{d-d_j} g_j, \\
        &{\bf G} \in\mathcal{S}_+^{s(n,d)}, \\&{\bf G}_j \in\mathcal{S}_+^{s(n,d-d_j)},\quad j\in \{1,\dotsc,m\}.\
    \end{cases}
\end{align}
\noindent
The sequences of SDP programs \eqref{eq:primal} and \eqref{eq:dual} are called the \emph{moment hierarchy} and the \emph{SOS hierarchy}, respectively. While \eqref{eq:primal} is a \emph{relaxation} of \eqref{eq: popasmom}, \eqref{eq:dual} is the corresponding \emph{reinforcement} (or \emph{strengthening}) of the dual \eqref{eq: global-sup} of \eqref{eq: popasmom}.
\noindent
The following assumption, slightly stronger than compactness, and automatically satisfied when $K$ involves one or several ball (in)equality constraints, ensures convergence of the Moment-SOS hierarchy. 
\begin{assumption}
\label{ass:archimedean}
There exists $N>0$ such that $N-\norm{\x}_2^2=\sum_{j=0}^m \sigma_j g_j$, where each $\sigma_j$ is an SOS polynomial.
\end{assumption}  
\begin{thm}[\cite{lasserre2001global}]
Under Assumption~\ref{ass:archimedean}, the hierarchies of primal-dual Moment-SOS relaxations \eqref{eq:primal}-\eqref{eq:dual} provide non-decreasing sequences of lower bounds converging to $f_{\min}$.
\end{thm}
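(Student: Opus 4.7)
The plan is to prove the theorem in four short steps: monotonicity of the hierarchies, the fact that both $f_d^{\operatorname{mom}}$ and $f_d^{\operatorname{sos}}$ lower-bound $f_{\min}$, weak SDP duality tying the two sequences together, and finally asymptotic convergence, which is the only step that genuinely uses the Archimedean assumption.

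For monotonicity of the SOS hierarchy, given a feasible tuple $(\lambda, {\bf G}, {\bf G}_1, \ldots, {\bf G}_m)$ for ${\bf P}_d^{\operatorname{sos}}$, I would pad each Gram matrix with zero rows and columns so that it lives in the appropriate PSD cone at order $d+1$; the underlying decomposition of $f-\lambda$ is unchanged, so the same $\lambda$ is feasible, giving $f_d^{\operatorname{sos}} \leq f_{d+1}^{\operatorname{sos}}$. For the moment hierarchy, any sequence $\y \in \RR^{\NN^n_{2(d+1)}}$ feasible at order $d+1$ has a truncation to $\NN^n_{2d}$ that is feasible at order $d$, because the PSD matrices at order $d$ are principal submatrices of those at order $d+1$, and the objective depends only on the moments $y_{\balpha}$ with $|\balpha|\le \deg(f)\le 2d$. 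Hence $f_d^{\operatorname{mom}} \leq f_{d+1}^{\operatorname{mom}}$.

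For the lower bound, if $\x^* \in K$ attains $f_{\min}$, then the moment sequence of $\mu^* = \delta_{\x^*}$ truncated to $\NN^n_{2d}$ is feasible for ${\bf P}_d^{\operatorname{mom}}$ by Proposition~\ref{prop: reprmeas} and evaluates the objective to $f(\x^*) = f_{\min}$, yielding $f_d^{\operatorname{mom}} \leq f_{\min}$. For the SOS side, any feasible $\lambda$ satisfies $f(\x) - \lambda = \vb_d^\top {\bf G}\, \vb_d + \sum_{j=1}^m (\vb_{d-d_j}^\top {\bf G}_j\, \vb_{d-d_j}) g_j(\x) \geq 0$ on $K$, since each quadratic form in $\vb$ is a nonnegative SOS polynomial and each $g_j(\x)\ge 0$, so $\lambda \leq f_{\min}$ and thus $f_d^{\operatorname{sos}} \leq f_{\min}$. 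Weak SDP duality $f_d^{\operatorname{sos}}\leq f_d^{\operatorname{mom}}$ is then obtained by applying $L_\y$ to the identity for $f-\lambda$ and using that $L_\y(\vb_d^\top {\bf G}\, \vb_d) = \trace({\bf G}\, \MM_d(\y))\ge 0$ whenever both factors are PSD, and similarly for the localizing terms.

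The hard part is asymptotic convergence to $f_{\min}$, and this is exactly where Assumption~\ref{ass:archimedean} is indispensable via Putinar's Positivstellensatz. Fix $\epsilon > 0$ and set $p_\epsilon := f - (f_{\min} - \epsilon)$, which is strictly positive on $K$. Under the Archimedean hypothesis, Putinar's theorem supplies SOS polynomials $\sigma_0,\sigma_1,\ldots,\sigma_m$ with $p_\epsilon = \sigma_0 + \sum_{j=1}^m \sigma_j\, g_j$. Choosing $d=d(\epsilon)$ large enough that each $\deg(\sigma_j g_j)\le 2d$, the Gram matrices of $\sigma_0,\ldots,\sigma_m$ produce a feasible triple for ${\bf P}_d^{\operatorname{sos}}$ with objective value $f_{\min}-\epsilon$, so $f_{\min} - \epsilon \leq f_d^{\operatorname{sos}} \leq f_d^{\operatorname{mom}} \leq f_{\min}$ for all sufficiently large $d$; by monotonicity this bound persists for larger orders, and letting $\epsilon\to 0$ yields convergence of both sequences. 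The nontrivial ingredient is Putinar's representation theorem itself, whose proof uses the Archimedean condition to ensure that the quadratic module generated by $\{g_j\}$ is Archimedean and hence captures all polynomials strictly positive on $K$; I would invoke it as a black box.
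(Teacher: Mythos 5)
Your proof is correct and follows the standard argument: monotonicity by embedding/truncation, weak duality via $L_{\y}(\vb_d^\top{\bf G}\vb_d)=\trace({\bf G}\,\MM_d(\y))\geq 0$, and asymptotic convergence from Putinar's Positivstellensatz applied to $f-(f_{\min}-\epsilon)$. The paper itself states this theorem with a citation to \cite{lasserre2001global} and gives no proof, and your argument is essentially the one in that reference (the only cosmetic point being that attainment of $f_{\min}$ on $K$, used in your Dirac-measure step, is guaranteed here because Assumption~\ref{ass:archimedean} forces $K$ to be compact).
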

\noindent
For the primal-dual pair \eqref{eq:primal}-\eqref{eq:dual}, the presence of ball constraints also  ensures zero duality gap \cite{josz2016strong}. 
Namely, we have $f_d^{\operatorname{sos}}=f_d^{\operatorname{mom}}$ and we can simply write $f_d$ to represent the optimal value of the Moment-SOS hierarchy of order $d$. In many practical cases, however, we are interested in the convergence of the hierarchy in finitely many steps. Stated differently, we would like the relaxation of order $d$ to be \textit{exact}, meaning that $f_d=f_{\min}$ for some finite, and preferably small, $d$. Sufficient conditions for this to happen are given by the following theorem:
\begin{thm}[Theorem~1.6, \cite{curto2000truncated}]\label{thm: flatness}
Consider the hierarchy of moment relaxations defined in \eqref{eq:primal}. If for some $d \geq d_{\min}$ there exists $d' \in \{d_{\min},\dots,d\}$  such that the SDP problem  ${\bf P}_d^{\operatorname{mom}}$ has an optimal solution $\y^{*}\in\RR^{\NN^n_{2d}}$ which satisfies
\begin{align}\label{eq: flatness}
    \mathrm{rank} \, {\bf M}_{d'}(\y^{*}) = \mathrm{rank} \, {\bf M}_{d' - d_{\min}}(\y^{*}),
\end{align}
then $f_d = f_{\min}$.  Moreover, the infinite-dimensional linear program from \eqref{prop: reprmeas} has an optimal solution $\mu^{*} \in \mathcal{M}_+(K)$, finitely supported on $r := \mathrm{rank} \, {\bf M}_d(\y^{*})$ global minimizers of $f$ on $K$.
\end{thm}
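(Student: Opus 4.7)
The plan is to invoke the Curto--Fialkow flat extension machinery to produce an atomic representing measure for $\y^*$ supported on $K$, and then exploit the fact that any feasible measure is a feasible point of the infinite-dimensional program~\eqref{eq: popasmom}.

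First, I would use the flatness hypothesis to extend $\y^*$ to an infinite moment sequence. The key classical fact (the \emph{flat extension theorem}) is that if ${\bf M}_{d'}(\y^*)$ is positive semidefinite and $\rk {\bf M}_{d'}(\y^*) = \rk {\bf M}_{d'-d_{\min}}(\y^*) =: r$, then ${\bf M}_{d'}(\y^*)$ admits a unique flat positive semidefinite extension ${\bf M}_{d'+1}, {\bf M}_{d'+2}, \ldots$ of the same rank $r$. Combined with a Riesz-type representation argument, this yields a finite atomic measure $\mu^* = \sum_{i=1}^r \lambda_i \delta_{\x^{(i)}}$ (with $\lambda_i > 0$) whose moments up to degree $2d'$ coincide with the entries of $\y^*$ restricted to $\NN^n_{2d'}$. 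Since $y^*_{\bf 0} = 1$, the weights satisfy $\sum_i \lambda_i = 1$, so $\mu^*$ is a probability measure.

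Next, I would show that $\supp(\mu^*) \subseteq K$. Because ${\bf M}_{d-d_j}(g_j \y^*) \succeq 0$ for every constraint polynomial $g_j$, and because the flat extension preserves this localizing structure (the extended moment matrix being flat forces all localizing matrices built from it to be expressible as integrals against $\mu^*$), one obtains
\begin{equation*}
    \int_{\RR^n} p(\x)^2 g_j(\x)\, d\mu^*(\x) \;=\; \p^\top {\bf M}_{d-d_j}(g_j \y^*) \p \;\geq\; 0
\end{equation*}
for all polynomials $p$ in an appropriate degree range. Evaluating on the atoms, this forces $g_j(\x^{(i)}) \geq 0$ for every atom $\x^{(i)}$ and every $j \in \{1,\dots,m\}$, hence $\x^{(i)} \in K$ for all $i$.

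Finally, I would close the loop using weak duality between the moment relaxation and the original problem. Since $\mu^*$ is a probability measure supported on $K$, it is feasible for the infinite-dimensional program~\eqref{eq: popasmom}, so
\begin{equation*}
    f_d \;=\; L_{\y^*}(f) \;=\; \sum_{i=1}^r \lambda_i f(\x^{(i)}) \;\geq\; f_{\min}.
\end{equation*}
The reverse inequality $f_d \leq f_{\min}$ is immediate from the fact that ${\bf P}_d^{\operatorname{mom}}$ is a relaxation of~\eqref{eq: popasmom}. Hence $f_d = f_{\min}$, and each atom $\x^{(i)} \in K$ attains the minimum value $f_{\min}$ (otherwise the convex combination above would be strictly larger than $f_{\min}$), so the $r$ atoms are global minimizers.

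The main obstacle is the flat extension theorem itself: proving that flatness of ${\bf M}_{d'}(\y^*)$ relative to ${\bf M}_{d'-d_{\min}}(\y^*)$ forces the existence of an atomic representing measure of exactly $r$ points is the deep content of \cite{curto2000truncated}. The argument proceeds by showing that the column relations of the flat extension generate a zero-dimensional real ideal whose variety consists precisely of the atoms, and that the Riesz functional $L_{\y^*}$ restricted to this quotient is a positive linear form reconstructible as an integral against a finite sum of Dirac masses. The rest of the proof (localizing inequalities forcing atoms into $K$, and the relaxation inequality) is comparatively routine once this representation is in hand.
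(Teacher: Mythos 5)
The paper does not actually prove this statement: it is quoted verbatim as Theorem~1.6 of Curto--Fialkow \cite{curto2000truncated} and used as a black box, so there is no internal proof to compare against. Your outline correctly reproduces the standard argument from the literature (flat extension theorem $\Rightarrow$ $r$-atomic representing measure, localizing matrices $\Rightarrow$ atoms lie in $K$, weak duality $\Rightarrow$ $f_d = f_{\min}$ and every atom is a global minimizer), and you are right that the only genuinely deep ingredient is the flat extension theorem itself, which you appropriately defer to the cited reference. One small imprecision: the identity $\int p^2 g_j \, d\mu^* = \p^\top {\bf M}(g_j\y^*)\p$ is only guaranteed at the truncation level where the moments of $\mu^*$ agree with $\y^*$, i.e.\ you should work with ${\bf M}_{d'-d_j}(g_j\y^*)$ (a principal submatrix of the constrained ${\bf M}_{d-d_j}(g_j\y^*)$, hence still PSD) rather than ${\bf M}_{d-d_j}(g_j\y^*)$; with that index corrected the sketch is sound.
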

\noindent
If the rank condition~\eqref{eq: flatness} from Theorem~\ref{thm: flatness} is satisfied, then one can extract (some) global minimizers of the initial POP thanks to a numerical linear algebra procedure \cite{henrion2005detecting}. 
\noindent
\begin{exmp}\label{ex: Leitmotif} Let $n=2$ and consider the following quadratic and nonconvex POP: 
\begin{align}
        {\bf P}:\quad  \begin{cases} \displaystyle \min_{\x \in K} &f(\x):= -(x_1 - 1)^2 - (x_1 - x_2)^2 - (x_2 - 3)^2,\\
 \text{s.t.} & K:=\set{\x\in\RR^2 \: \mid \: g_j(\x)\geq 0,\: j\in\set{1,\dots,4}}, \text{where}\\
 & g_1(\x) := 1 - (x_1 - 1)^2,\:  g_2(\x) := 1- (x_1 - x_2)^2,\\
 & g_3(\x) :=1 - (x_2 - 3)^2,\:  g_4(\x) :=x_1- 0.3 x_2^2.
\end{cases}
    \end{align}   
The first-order moment relaxation of this POP involves the pseudo-moment matrix of size $3$ and four linear localizing constraints. 
Then, $\y^{*}_1=(1.0,1.6562,2.0833,3.3124,3.4061,4.4997)\in\RR^{\NN^2_2}$ is the optimal pseudo-moment sequence satisfying $L_{\y^{*}_1}(f) = f_1 \simeq -3$, and the corresponding first-order pseudo-moment matrix, whose numerical rank is 3, is given by:
\begin{align}\label{eq: inexactTRIANGLE}
    {\bf M}_1(\y^*_1)=\begin{bmatrix}
1.0000 & 1.6562 & 2.0833 \\
1.6562 &  3.3124 & 3.4061 \\
2.0833 & 3.4061 & 4.4997
\end{bmatrix}
\end{align}
Furthermore, after solving the second-order moment relaxation, which involves a pseudo-moment matrix of size $6$ and four localizing matrices of size $3$, we eventually obtain the optimal pseudo-moment sequence $\y^{*}_2=(1,2,2,4,4,4,8,8,8,8,16,16,16,16,16)\in\RR^{\NN^2_4}$ such that $\mathrm{rank} \,{\bf M}_{2}(\y^{*}_2) = \mathrm{rank} \, {\bf M}_{1}(\y^{*}_2)=1$, implying that $\y^{*}_2$ is actually the moment sequence of the Dirac measure concentrated at a global minimizer $\x^*=(2,2)$, and $L_{\y^{*}_2}(f)=f_2=-2=f_{\min}$. 
\end{exmp}
\noindent
In \cite{FiniteCO-Nie2014}, it has been proven that the condition from \eqref{eq: flatness}, called \textit{flatness condition}, holds generically.
However, for many high-dimensional problems, solving higher order relaxations is either too costly or even impossible, since the size of the involved SDP matrices increases rapidly. 
Indeed at a fixed order $d$, the corresponding moment relaxation involves a matrix of size $s(n,d)=\binom{n+d}{n} = \mathcal{O}(n^{d})$ when $n \to \infty$. 
That is why we propose to exploit Christoffel-Darboux kernels to strengthen the relaxation bounds at order $d$.

\subsection{Christoffel-Darboux kernels}
\label{sec:cdk}
The Christoffel-Darboux kernel (CDK) is a well-known and valuable tool in the domain of approximation theory and orthogonal polynomials. Recently, empirical versions of this kernel have emerged, making  it quite useful for solving various tasks in the domain of data analysis. Namely, Christoffel-Darboux kernel provides a numerical framework for estimating the support of an unknown measure given only a finite number of its moments. For an in-depth discussion of the Christoffel-Darboux kernels, their properties, and historical developments, see \cite{nevai1986geza,simon2008christoffel,kroo2013christoffel,lasserre2022christoffel}.
\\
Let $\mu\in\mathcal{M}_+(\Omega)$, with $\Omega\subset \RR^n$ compact with non-empty interior. 
{We highlight that $\Omega$ does not necessarily have to represent a feasible set of some POP.} The moment matrix of order $d\in\NN^{*}$ associated to $\mu$ is given by
\begin{align}
    {\bf M}_d(\mu):=\int_{\RR^n}\vb_d(\x)\vb_d(\x)^\top \, d\mu(\x) \in \mathcal{S}^{s(n,d)}_+,
\end{align}
where the integral should be understood component-wise.
Such measure $\mu$  induces a bilinear form on $\RR_d[\x]$, namely,  $\langle p,q\rangle_\mu=\int_{\Omega}p(\x)q(\x) \, d\mu(\x)$, for any $p,q \in \RR_d[\x]$.  If, in addition, $ \langle ,\rangle_\mu$ defines a scalar product 
then the finite-dimensional space $(\RR_d[\x],\langle ,\rangle_\mu) $ is as a Reproducing Kernel Hilbert space \cite{dunkl2014orthogonal}. Its associated kernel is called the \emph{Christoffel-Darboux kernel} (CDK), and it is defined for every $d\in\NN^{*}$ as 
\begin{align}
\label{def:CDK}
    K_d^{\mu}:\ \RR^n\times \RR^n \ni (\x,\y) &\mapsto K_d^{\mu}(\x,\y):=\displaystyle \sum_{\balpha\in \NN^n_d}p_{\balpha}(\x)p_{\balpha}(\y) \in\RR \notag, 
\end{align}
where $(p_{\balpha})_{\balpha \in \NN^n_d}$ is a family of polynomials that are orthonormal with respect to $\langle,\rangle_\mu$. 
Moreover, whenever $ \langle ,\rangle_\mu$ defines a valid scalar product, or stated equivalently, whenever the moment matrix ${\bf M}_d(\mu)$ is invertible,  it is possible to show \cite{simon2008christoffel} that
\begin{equation}
    K_d^{\mu}(\x,\y)=\vv_d(\x)^\top{\bf M}_d(\mu)^{-1}\vv_d(\y),\quad  \x,\y\in \RR^n.
\end{equation}
In this case, we can also define the \emph{Christoffel polynomial}  $\Lambda_d^{\mu}$ to be the diagonal of the CDK:
\begin{align}
\label{def:CF}
    \Lambda_d^{\mu}:\ \RR^d \ni \x \mapsto \Lambda_d^{\mu}(\x):=K_d^{\mu}(\x,\x)\in\RR_{+}.
\end{align}
By construction, $\Lambda_d^{\mu}=\displaystyle\sum_{\balpha\in \NN^n_d} p_{\balpha}^2$ is an SOS polynomial of degree $2d$.
\begin{rem}
    Notice that the Christoffel-Darboux kernel and, therefore, the Christoffel polynomial itself, only depend on the finite sequence $\y\in\RR^{\NN^n_{2d}}$ of moments of $\mu$. Thus, we can interchangeably write $\Lambda_d^{\mu}$ or $\Lambda_d^{\y}$,  $K_d^{\mu}$ or $K_d^{\y}$, and ${\bf M}_d(\mu)$ or ${\bf M}_d(\y)$.
\end{rem}
\noindent
One of the most remarkable features of the Christoffel polynomials $(\Lambda_d^\mu)_{d \in \mathbb{N}}$ is their ability to identify the support of the underlying measure $\mu$. Indeed, as explained in \cite[Section 4.3, p.
50–51]{ChristoffelBook-Lasserre2022}, when increasing the degree $d$, one may observe the following important dichotomy:
\begin{itemize}
    \item $\forall \x \in \text{supp}(\mu), \ \Lambda_d^\mu(\x)$ grows at most polynomially, when we let $d \to +\infty$.
    \item $\forall \x \notin \text{supp}(\mu), \ \Lambda_d^\mu(\x)$ grows at least exponentially,  when we let $d \to +\infty$. 
\end{itemize}
\noindent
Then, $\text{supp}(\mu)
$ can be 
approximated via appropriately chosen sublevel sets of the form:
\begin{align}
    S_d(\mu,\gamma):=\set{\x\in\RR^n \mid \Lambda^{\mu}_d(\x)\leq \gamma}.
\end{align}
To illustrate this phenomenon, let $\mu$ be the uniform measure over the unit square $[0, 1]^2$.
\begin{figure}[H]
    \centering
    \begin{subfigure}[b]{0.495\textwidth}
        \centering
\includegraphics[width=\textwidth,height=0.85\textwidth]{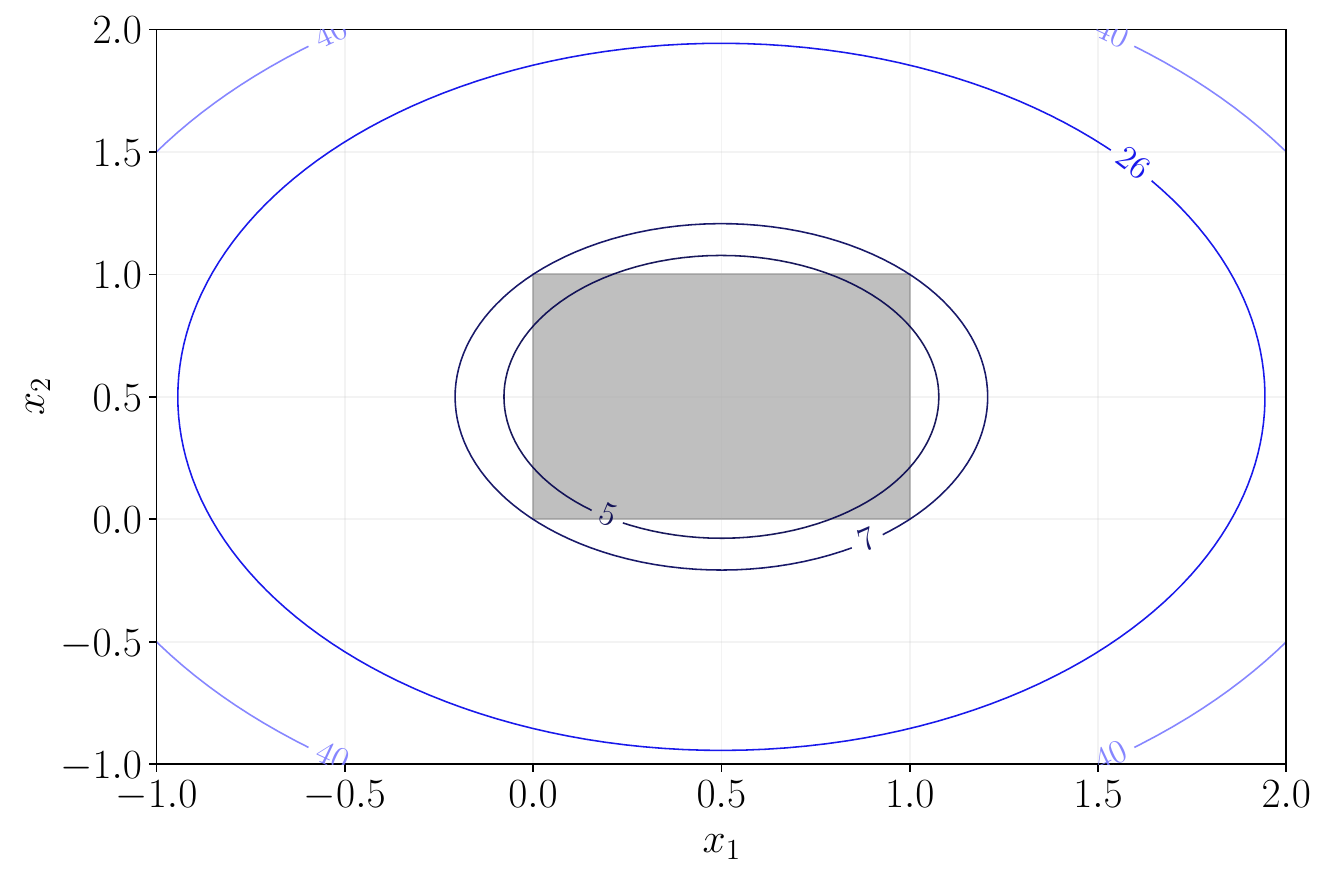}
        \caption{$d=1$}
    \end{subfigure}
    \hspace{-0.15cm}
    \begin{subfigure}[b]{0.495\textwidth}
        \centering
        \includegraphics[width=\textwidth,height=0.85\textwidth]{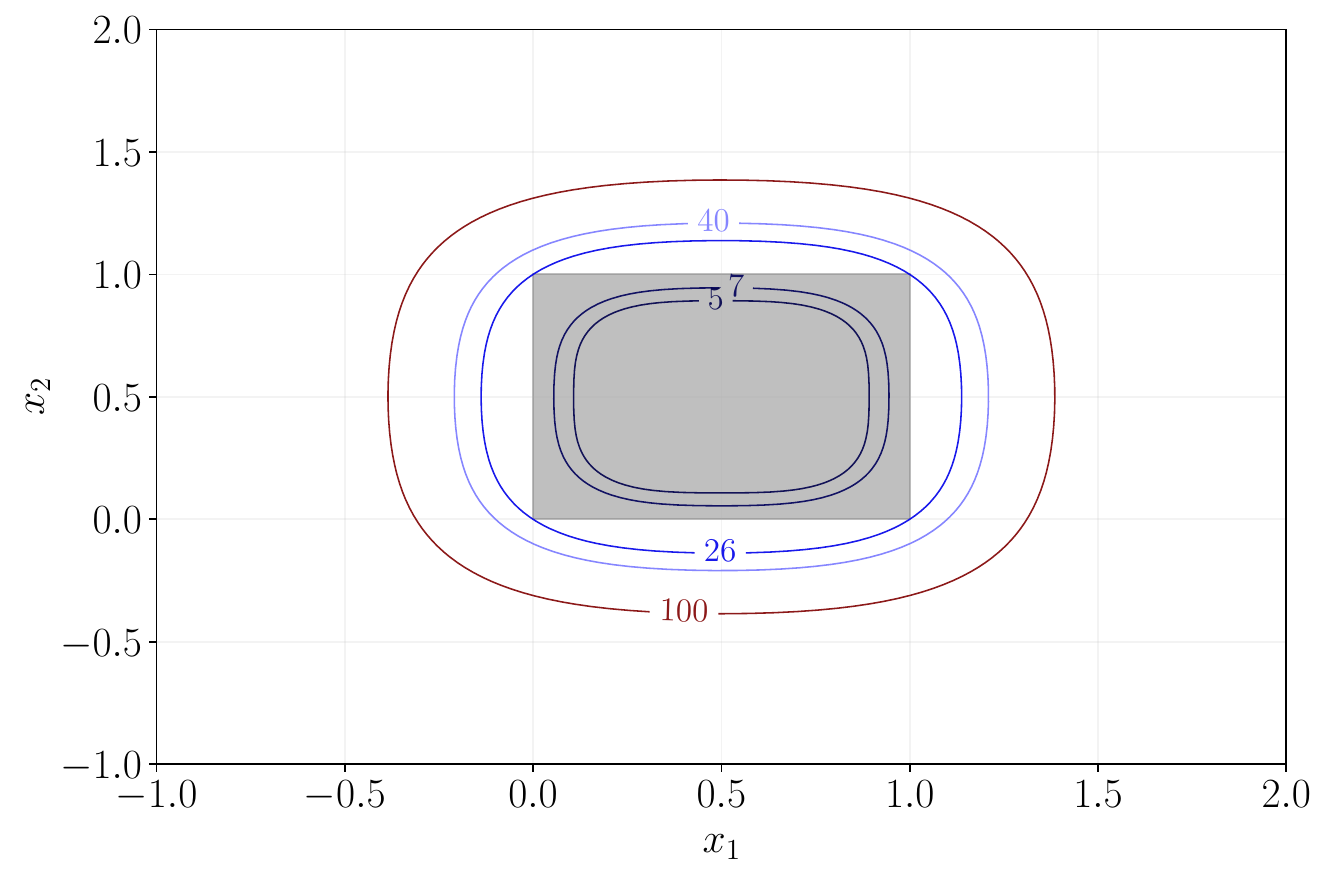}
        \caption{$d=2$}
    \end{subfigure}
    \caption{Sublevel sets of the Christoffel polynomials associated with the uniform measure over the unit square, i.e., $d\mu(\x)=\mathbbm{1}_{[0,1]^2}(\x)d\x$. The figure depicts $S_1(\mu,\gamma)$ (left) and $S_2(\mu,\gamma)$ (right) for $\gamma \in \{5, 7, 26, 40, 100\}$.}
    \label{fig: supportidentification}
\end{figure}
\noindent
As displayed in Figure \ref{fig: supportidentification}, the sublevel sets $S_2(\mu,\gamma)$ constructed using quartic Christoffel polynomials, provide better approximation of the unit square. Furthermore, the exponential growth of $\Lambda^\mu_d(\x)$ for points outside of the unit square can be illustrated by, for example, considering the point $\x=(0,1.25)$, which belongs to $S_1(\mu,26)$ but does not even belong to $S_2(\mu,100)$.

\section{Strengthening relaxation bounds}
\label{sec:Heuristics}
\noindent
When solving any POP instance, quite often, the main goal is to recover the optimal measure $\mu^{*}$, which is finitely supported on the minimizers of $f$, as described in Theorem \ref{thm: flatness}. However, for difficult POPs, the relaxation bound obtained at the step $d$ of the hierarchy is usually not exact, i.e., $f_d<f_{\min}$. Moreover, elevated computational costs can prevent from achieving convergence by computing higher order bounds such as $f_{d+1}$ or $f_{d+2}$, for example.
\\
In this section, we present two heuristic methods that leverage the information available at order $d$ of the hierarchy. More precisely, these methods aim to construct a modified POP and \textit{solve} its relaxation at the same order $d$. 
Since this modified POP restricts the feasible set, this approach yields tighter bounds. Namely, we will set the new feasible set to be
\begin{align}
    \widetilde{K}:=K\cap S_d(\y^{*},\gamma)=K\cap \set{\x\in\RR^n \, \mid \, \gamma - \Lambda_d^{\y^*}(\x)\geq 0 },
\end{align}
where $\y^*$ is the optimal solution of the initial relaxation at order $d$, and $\gamma>0$ is the parameter to be tuned. In what follows, we argue that this approach can help obtaining a tighter lower bound $\widetilde{f}_d$ such that $f_d\leq \widetilde{f}_d\leq f_{\min}$, and potentially enable the extraction of minimizers, \textit{even at step $d$ of the hierarchy}. Before delving into a detailed description of our method, we first outline some of the general challenges associated with it.
\\
\textit{Firstly}, the pseudo-moment matrix ${\bf M}_d(\y^{*})$ may only be positive semidefinite, in which case computing $\Lambda_d^{\y^*}$ as described in \eqref{def:CF} would introduce a significant amount of numerical instability and inaccuracy. We attempt to mitigate these problems by computing the Christoffel polynomials in a slightly different manner. Notice that we can always write
\begin{align}
    \label{eq: spectral}
    {\bf M}_d(\y^{*})={\bf P}{\bf E}{\bf P}^\top,
\end{align}
where ${\bf P}$ is an orthonormal matrix of size $s(n,d)$ whose columns are denoted by $\p_i, \:i\in\set{1,\dots,s(n,d)}$, and ${\bf E}\in\mathcal{S}_+^{s(n,d)}$ is the diagonal matrix containing the eigenvalues  $e_{1}\geq\dots \geq e_i \geq \dots \geq e_{s(n,d)}\geq 0$ of 
${\bf M}_d(\y^{*})$. Interpreting the eigenvectors $\p_i$ as coefficients of polynomials $p_i \in \RR_d[\x]$ yields the \textit{Tikhonov regularization} of the Christoffel polynomial of order $d$:
\begin{align}\label{eq: regularization}
\widetilde{\Lambda}_d^{\y^*}:=\sum_{i=1}^{s(n,d)}\frac{p_i^2}{e_i+\beta},
\end{align}
where $\beta>0$ is a small regularization parameter. A measure-theoretic interpretation of this regularization can be found in \cite{CDK-Swan2021}.
\noindent
\noindent
For instance, if we suppose that ${\bf M}_d(\y^*)$ is expressed in the basis of polynomials $(q_{\balpha})_{\balpha}$ that are orthonormal w.r.t. the Lebesgue measure $\lambda$ on $K$, and if $\y_\lambda$ denotes the moment sequence of $\lambda$, then $\widetilde{\Lambda}^{\y^*}_d=\Lambda^{\y^*+\beta\y_\lambda}_d$, where
\begin{align}
        L_{\y^*+\beta \y_\lambda}(p)&=L_{\y^*}(p)+\beta\,\int_K p(\x)\,d\lambda(\x)\, ,\quad\forall p\in\mathbb{R}_{2d}[\x],\\
{\bf M}_d(\y^*+\beta\,\y_\lambda)(\balpha,\bgamma)&= L_{\y^*}(q_{\balpha} q_{\gamma})\,+\beta\,\mathbbm{1}_{\balpha=\bgamma}\,,\quad \forall\balpha,\bgamma\in\mathbb{N}^n_d.
\end{align}
\noindent 
\noindent If we denote by $r\in\NN$ the number of zero eigenvalues, or, stated differently, the dimension of the kernel of the pseudo-moment matrix ${\bf M}_d(\y^{*})$, then the sublevel sets associated to $\widetilde{\Lambda}_d^{\y^*}$ that we will use for strengthening the relaxations can be written in the following way:
\begin{align}\label{eq: levelsetsREG} \widetilde{S}_d(\y^{*},\gamma):=\set{\x\in\RR^n \: \Big| \: \sum_{i=1}^{s(n,d)-r}\frac{p_i^2(\x)}{e_i+\beta}\leq \gamma, \: p_j^2(\x)\leq \beta,\: s(n,d)-r+1\leq j \leq s(n,d)}.
\end{align}
The rationale behind the expression in \eqref{eq: levelsetsREG} is as follows: if $\x \in \mathbb{R}^n$ is such that there exists $j \in \{ s(n,d) - r + 1, \dots, s(n,d) \}$ for which $p_j^2(\x) > 0$, then division by the associated eigenvalue $e_j + \beta$ would result in a very large positive term. Consequently, such an $\x$ would not belong to the $\gamma$-sublevel set for any reasonably large $\gamma$. Thus, it is reasonable to believe that the minimizers of $f$ should be contained in the neighborhood of the algebraic variety $\set{\x\in\RR^n \: \mid \: p_j^2(\x)=0, \: j\in\set{s(n,d)-r+1,\dots, s(n,d) }}$. 
Moreover, the sum involving polynomials that are not in the kernel of the pseudo-moment matrix should help us identify a more precise location within this variety where the minimizers could lie.
\\
\textit{Secondly}, the challenge lies in appropriately selecting the threshold $\gamma>0$. If $\gamma$ is too small, the resulting feasible set may shrink excessively. On the other hand, if $\gamma$ is too large, the relaxation may remain too loose, failing to improve the bound in a meaningful way. To illustrate this more effectively, let us revisit Example \ref{ex: Leitmotif}.
  \begin{figure}[H]
    \centering    \includegraphics[width=0.95\textwidth]{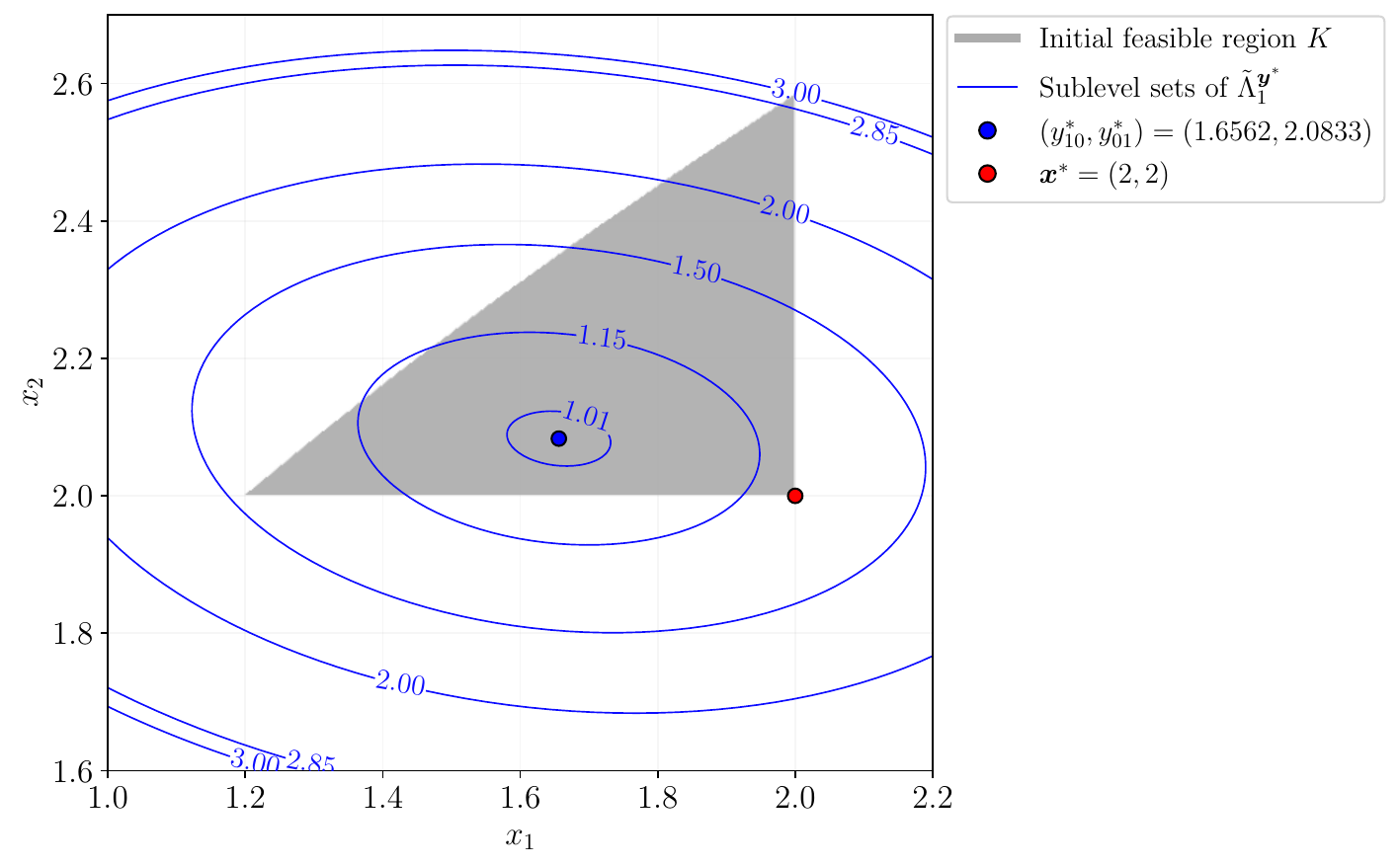} 
    \caption{Depicting sublevel sets $\widetilde{S}_1(\y^{*},\gamma)$ associated to the POP from  Example \ref{ex: Leitmotif}, where $\gamma\in\set{1.01, 1.15, 1.50, 2.0, 2.85, 3.0}$. The red point is the true minimizer of $f$, and the blue point corresponds to the pseudo-moments of order one extracted from the optimal solution $\y^*$ of the first-order moment relaxation.}
    \label{fig:choosing_gamma}
\end{figure}  
\noindent
As depicted in Figure \ref{fig:choosing_gamma}, different sublevel sets induce different restrictions of the initial feasible set $K$. For example, with $\gamma=1.50$, the upper part of $K$ is eliminated, resulting in a \textit{valid and improved} lower bound $\widetilde{f}_1$ such that $f_{\min}=-2>\widetilde{f}_1=-2.3131>f_1=-3$. On the other hand, any $\gamma$ that is too small could overly restrict $K$, and eventually make the true minimizer of $f$ unfeasible. For instance, if $\gamma=1.15$, we obtain an \textit{invalid} (upper)  bound $\widetilde{f}_1=-1.8577>f_{\min}$. 
\\
To summarize, level sets of the Christoffel polynomial of order $d$ give us valuable information about the location of the true minimizers of $f$. 
To leverage this information, efficient control of the parameter $\gamma$ is essential. The following subsections present two heuristic methods to balance lower bound tightness with the risk of obtaining an upper bound.

\subsection{Iterative approach - \texorpdfstring{$\HH$}{H1}} 
The first heuristic method we propose, called $\HH$, is aimed at progressively reducing the feasible set until a more satisfactory lower bound is obtained. This approach relies on re-executing relaxations of order $d$, which is often more efficient than increasing the relaxation order in the hierarchy, particularly if the number of re-executions or iterations remains relatively small. At each iteration $k\geq 1$, the $\gamma_k$-sublevel set is selected in such a way that the previously optimal pseudo-moment sequence $\y^{*}_{(k-1)}\in\RR^{\NN^n_{2d}}$ becomes infeasible for the modified POP, forcing the lower bound to improve. 
Choosing the threshold $\gamma_k$ which ensures this behavior is based on the following proposition:
\begin{prop}
Let $d \in \NN^{*}$ and $\y^*\in \RR^{\NN^n_{2d}}$ be an optimal solution of the moment relaxation of order $d$. 
Then, 
\begin{align}
L_{\y^{*}}\left(\widetilde{\Lambda}^{\y^*}_d\right) 
\,=\,\sum_{i=1}^{s(n,d)-r}\frac{e_i}{e_i+\beta}\quad(\approx \mathrm{rank}\:{\bf M}_d(\y^{*})\mbox{ when $\beta$ is small)}.
\end{align}
\end{prop}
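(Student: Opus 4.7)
The plan is to exploit the linearity of $L_{\y^*}$ together with the spectral decomposition ${\bf M}_d(\y^*) = {\bf P}{\bf E}{\bf P}^\top$ recalled in \eqref{eq: spectral}. Since $\widetilde{\Lambda}_d^{\y^*}$ is, by definition \eqref{eq: regularization}, a weighted sum of the squares $p_i^2$ with weights $(e_i+\beta)^{-1}$, the key computation reduces to evaluating $L_{\y^*}(p_i^2)$ for every eigenpolynomial $p_i$.

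First, I would recall the standard identity $L_{\y^*}(q^2) = \q^\top {\bf M}_d(\y^*)\q$ for any $q\in\RR_d[\x]$ with coefficient vector $\q$ in the basis $\vb_d$. Applied to $q = p_i$, whose coefficient vector is the $i$-th column $\p_i$ of the orthonormal matrix ${\bf P}$, this gives
\begin{align*}
L_{\y^*}(p_i^2) \;=\; \p_i^\top {\bf P}{\bf E}{\bf P}^\top \p_i \;=\; (\bm{e}_i)^\top {\bf E}\,\bm{e}_i \;=\; e_i,
\end{align*}
where $\bm{e}_i$ denotes the $i$-th canonical basis vector of $\RR^{s(n,d)}$ and the second equality uses ${\bf P}^\top \p_i = \bm{e}_i$ by orthonormality of the columns of ${\bf P}$.

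Next, I would combine this with the linearity of $L_{\y^*}$ to conclude
\begin{align*}
L_{\y^*}\!\left(\widetilde{\Lambda}_d^{\y^*}\right) \;=\; \sum_{i=1}^{s(n,d)} \frac{L_{\y^*}(p_i^2)}{e_i+\beta} \;=\; \sum_{i=1}^{s(n,d)} \frac{e_i}{e_i+\beta}.
\end{align*}
Since by the definition of $r$ the last $r$ eigenvalues $e_{s(n,d)-r+1},\dotsc,e_{s(n,d)}$ all vanish, the corresponding summands drop out and the sum collapses to $\sum_{i=1}^{s(n,d)-r} e_i/(e_i+\beta)$, which is the claimed identity. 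Finally, for the parenthetical approximation, I would simply observe that each of the $s(n,d)-r$ surviving ratios tends to $1$ as $\beta\to 0^+$ (since then $e_i>0$), so the total tends to the number of nonzero eigenvalues, which is exactly $\mathrm{rank}\,{\bf M}_d(\y^*)$.

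I do not anticipate any genuine obstacle: the proof is essentially a one-line spectral calculation. The only minor subtlety worth flagging is the careful bookkeeping of the null eigenvalues (to justify both the truncation of the sum at $s(n,d)-r$ and the well-definedness of $\widetilde{\Lambda}_d^{\y^*}$, thanks to the regularization parameter $\beta>0$).
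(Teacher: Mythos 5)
Your proposal is correct and follows essentially the same route as the paper's proof: linearity of $L_{\y^*}$, the identity $L_{\y^*}(p_i^2)=\p_i^\top{\bf M}_d(\y^*)\p_i$, and the spectral decomposition to evaluate this as $e_i$, after which the zero eigenvalues drop out and the small-$\beta$ limit gives the rank. Your explicit use of orthonormality of ${\bf P}$ to get $\p_i^\top{\bf P}{\bf E}{\bf P}^\top\p_i=e_i$ is just a slightly more detailed rendering of the paper's final step.
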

\begin{proof}
From \eqref{eq: spectral} and 
\eqref{eq: regularization} one has 
\begin{align} 
    \begin{split}
        L_{\y^{*}}\left(\widetilde{\Lambda}^{\y^*}_d\right) &
                = L_{\y^{*}} \left( \sum_{i=1}^{s(n,d)}\frac{p_i^2}{e_i+\beta}\right)=\sum_{i=1}^{s(n,d)}\frac{1}{e_i+\beta} L_{\y^{*}} \left( p_i^2\right)\\
        &=\sum_{i=1}^{s(n,d)}\frac{1}{e_i+\beta} \p_i^\top {\bf M}_d(\y^{*})\p_i = \sum_{i=1}^{s(n,d)}\frac{1}{e_i+\beta}e_i\mathbbm{1}_{\p_i\notin\operatorname{ker}{\bf M}_d(\y^{*})}.
    \end{split}
\end{align}
As the dimension of the kernel of the pseudo-moment matrix ${\bf M}_d(\y^{*})$ is $r\in\NN^{*}$, one obtains 
\begin{align}
    L_{\y^{*}}\left(\widetilde{\Lambda}^{\y^*}_d\right)
    \,=\,\sum_{i=1}^{s(n,d)-r}\frac{e_i}{e_i+\beta}\,,
\end{align}
and therefore $L_{\y^{*}}\left(\widetilde{\Lambda}^{\y^*}_d\right)\approx\mathrm{rank}({\bf M}(\y^*))$ when $\beta$ is small,
which concludes the proof.
\end{proof}
\noindent
Thus, by choosing $\gamma \geq L_{\y^{*}}(\widetilde{\Lambda}^{\y^*}_d)$, minimizing $f$ over $\widetilde{K} = K \cap \widetilde{S}_d(\y^*, \gamma)$ yields little improvement, as the existing optimal solution $\y^{*}$ remains feasible. Instead, using the penalized threshold $\gamma = (1-\varepsilon)L_{\y^{*}}(\widetilde{\Lambda}^{\y^*}_d)$ with $\varepsilon \in (0,1)$ excludes $\y^{*}$ from the feasible set, improving the lower bound. This process is repeated until a maximum number of iterations is reached or the bound improvement is deemed satisfactory. The true optimal value $f_{\min}$ being unknown, we assess the tightness of the new lower bound $\widetilde{f}_d$ by computing the \textit{(relative) optimality gap}:
\begin{align}
\Delta(f(\overline{\x}), \widetilde{f}_d) := \left(\frac{|f(\overline{\x}) - \widetilde{f}_d|}{|f(\overline{\x})|}\mathbbm{1}_{f(\overline{\x}) \neq 0} + |f(\overline{\x}) - \widetilde{f}_d|\mathbbm{1}_{f(\overline{\x}) = 0}\right) \times 100\%,
\end{align}
where $f(\overline{\x})$ is a valid upper bound of $f_{\min}$ obtained by evaluating $f$ at a given locally optimal solution $\overline{\x} \in \RR^n$. The steps of this iterative approach $\HH$ are outlined in Algorithm~\ref{alg:iteration-based-heuristic}.
\begin{algorithm}[!htbp]
\caption{Implementing $\HH$ }
\label{alg:iteration-based-heuristic}
\begin{itemize}[label={}]
\item {\bf Input:} Relaxation order $d$, gap tolerance $\delta > 0$, maximum number of iterations $N$, and a penalization factor $\varepsilon \in (0,1)$. 
\item {\bf Initialize:} \(k=0\)
\begin{enumerate}
\item Initialize the feasible set $\widetilde{K}_k=K$. 
\item Solve the moment relaxation of order $d$. Recover its optimal solution \( \y^{*}_{k}\), relaxation bound $\widetilde{f}_{d,k}=f_d$, and a local solution $\overline{\x}_{k}$ yielding an upper bound $\operatorname{ub}_k=f(\overline{\x}_{k})$.
\item Initialize the relative gap $\Delta_k=\Delta\left(f(\overline{\x}_k),\widetilde{f}_{d,k}\right)$.\\
\end{enumerate}
\item {\bf While} $k<N$ 
\begin{enumerate}[resume]
\item \textbf{If} $\widetilde{f}_{d,k} > \operatorname{ub}_k$  \textbf{or} $\Delta_{k} \leq \delta $
    \begin{itemize}
        \item \textbf{Break}.
    \end{itemize}
    \item Compute the Christoffel polynomial $\widetilde{\Lambda}^{\y^*_k}_d$ and set $\gamma_{k} = L_{\y^{*}_k}\left( \widetilde{\Lambda}_d^{\y^*_k}\right)$.
    \item \label{alg: setModification} Modify the feasible set via $\widetilde{K}_{k+1}=\widetilde{K}_k\cap\widetilde{S}_d(\y^{*}_k,(1-\varepsilon)\gamma_k)$, and solve the moment relaxation of order $d$. Recover the new relaxation bound $\widetilde{f}_{d,k+1}$, the new optimal solution $\y^{*}_{k+1}$, and the new locally optimal solution $\overline{\x}_{k+1}$.  
    \item Set $\operatorname{ub}_{k+1}=\displaystyle\min_{i\in\set{0,k+1}}\set{f(\overline{\x}_{i+1})}$ and re-evalute the gap $\Delta_{k+1}=\Delta\left(\operatorname{ub}_{k+1}, \widetilde{f}_{d,k+1} \right)$.
    \item Update the iteration count $k=k+1$.
\end{enumerate}
\item \textbf{Output}: Sequence of bounds $\left(\widetilde{f}_{d,k}\right)_{k\geq1}$ satisfying $\widetilde{f}_{d,k+1}\geq \widetilde{f}_{d,k}\geq f_d$.
\end{itemize}
\end{algorithm}
\FloatBarrier
\noindent
We emphasize that there are no guarantees that Algorithm \ref{alg:iteration-based-heuristic} will produce a sequence $(\widetilde{f}_{d,k})_{k \geq 1}$ such that its last element is a valid lower bound of $f_{\min}$. However, our experimental results presented in Section \ref{sec:num_exp} suggest that such  \textit{non-desirable} occurrences are relatively rare.
\begin{rem}
  Notice from step \eqref{alg: setModification} of Algorithm \ref{alg:iteration-based-heuristic} that, at each iteration, all previously added sublevel constraints are retained.
  This ensures that the feasible set of POP is progressively restricted, and, in turn, that no previously excluded optimal pseudo-moment sequences re-enter the feasible set of moment relaxations.
\end{rem}
\noindent
The behavior of Algorithm \ref{alg:iteration-based-heuristic} is illustrated on Example \ref{ex: Leitmotif}, with results shown in Figure \ref{fig: iterations}.
\begin{figure}[H]
    \centering
    \begin{subfigure}[b]{0.49\textwidth}
        \centering
\includegraphics[width=\textwidth,height=\textwidth]{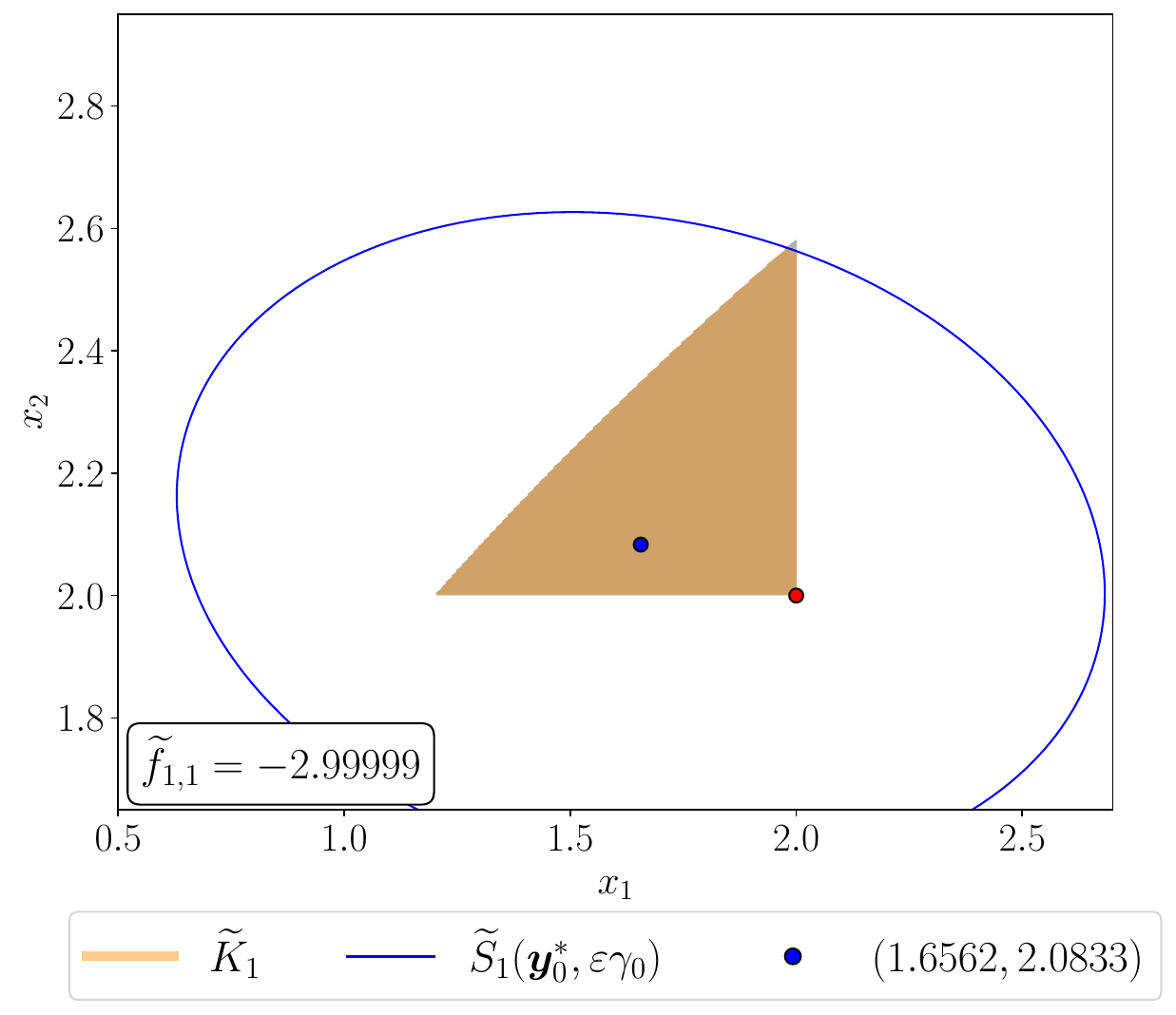}
    \end{subfigure}
    \hspace{-0.2cm}
    \begin{subfigure}[b]{0.49\textwidth}
        \centering
\includegraphics[width=\textwidth,height=\textwidth]{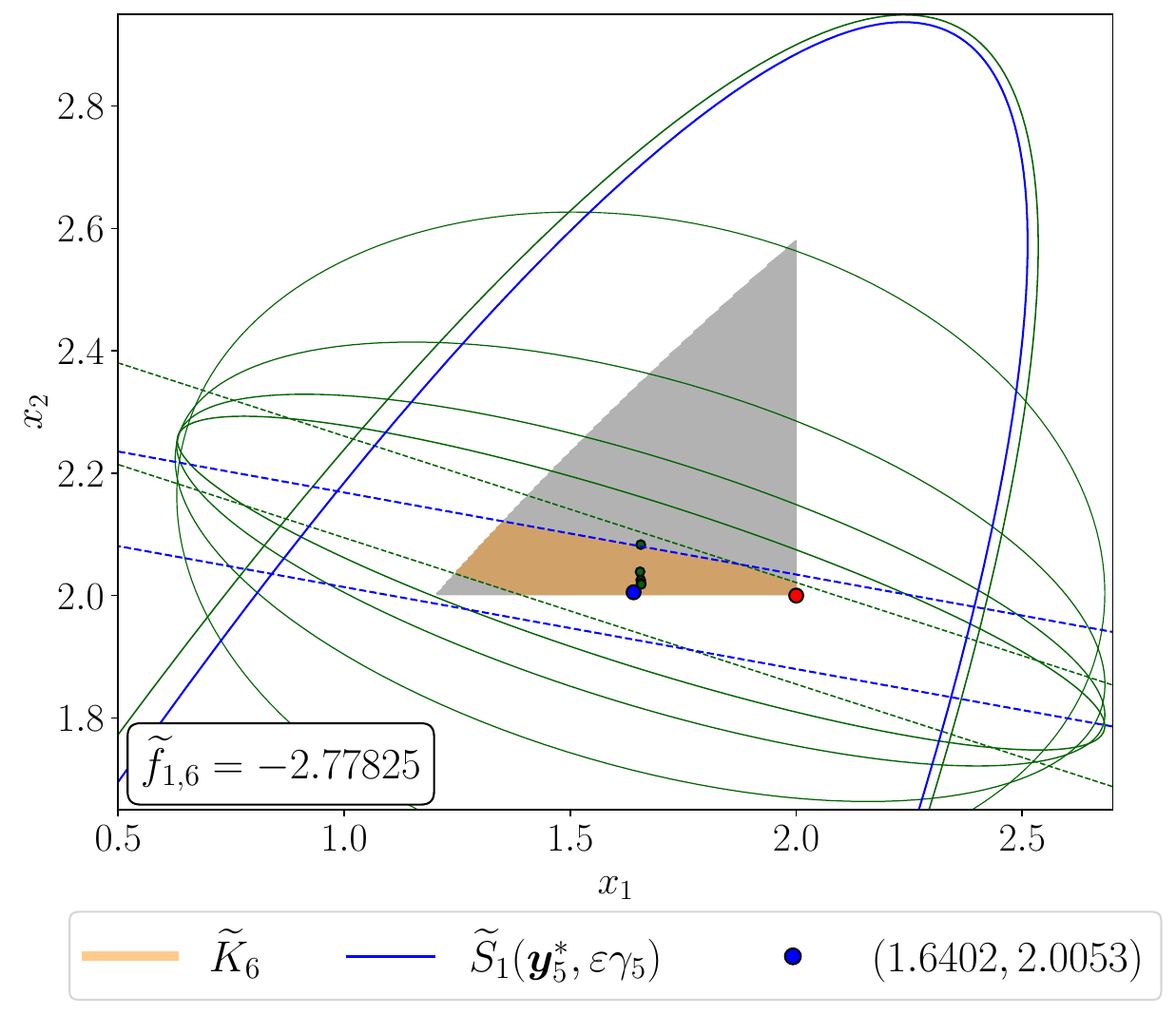}
    \end{subfigure}
    \\ 
    \begin{subfigure}[b]{0.49\textwidth}
        \centering
\includegraphics[width=\textwidth,height=\textwidth]{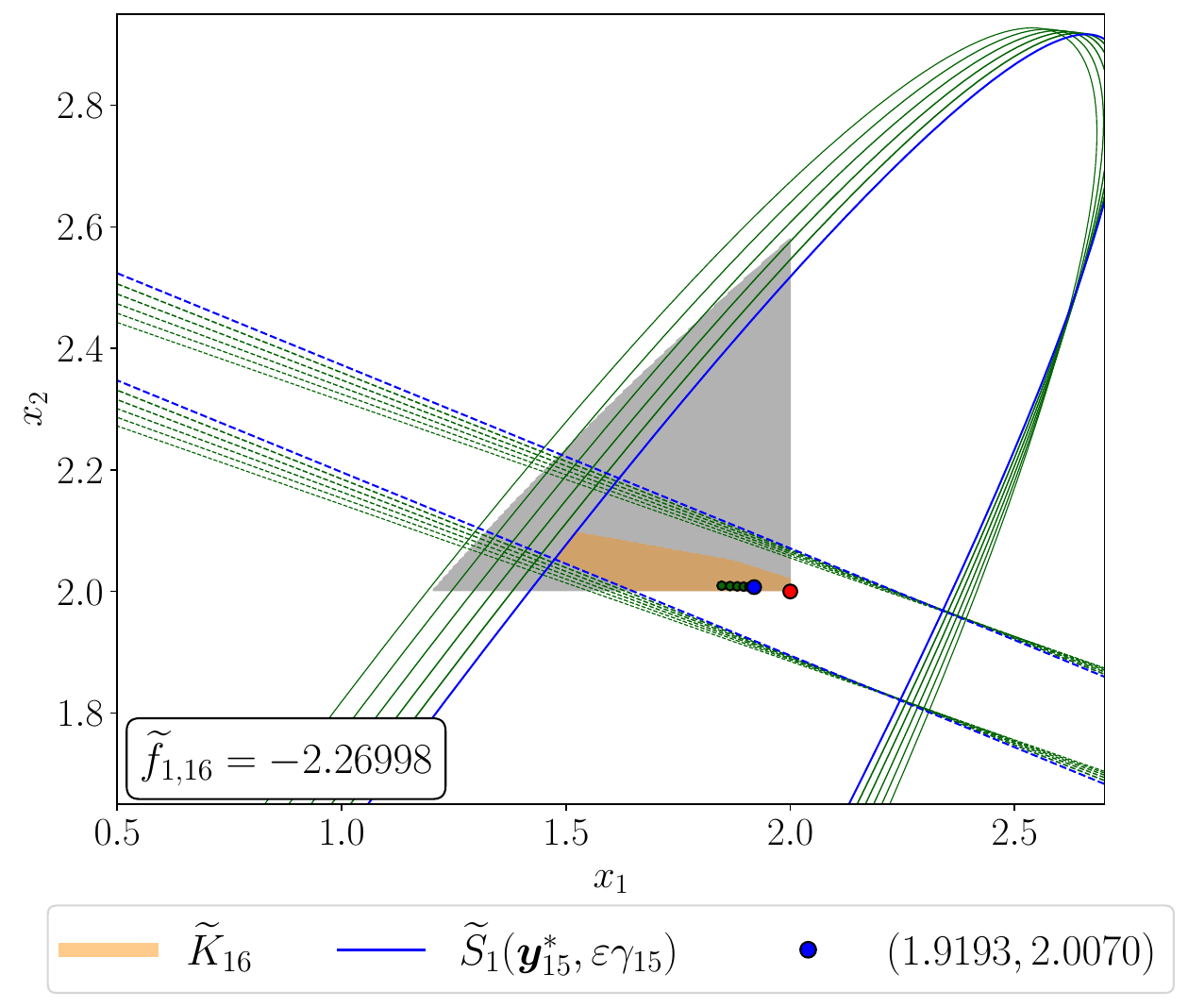}
    \end{subfigure}
    \hspace{-0.2cm}
    \begin{subfigure}[b]{0.49\textwidth}
        \centering
\includegraphics[width=\textwidth,height=\textwidth]{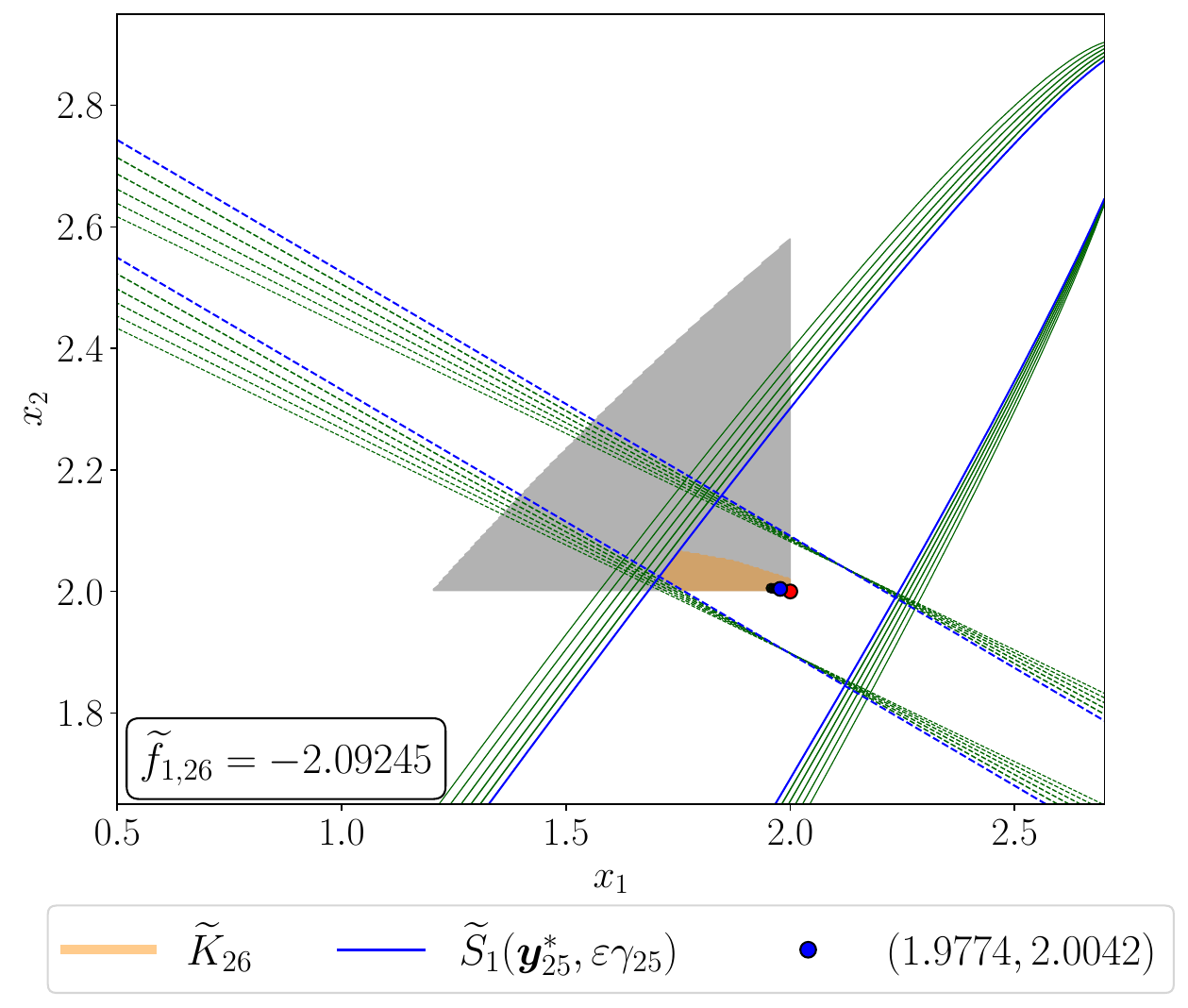}
    \end{subfigure}
    \caption{Different iterations $k\in\set{0,5,15,25}$ of Algorithm \ref{alg:iteration-based-heuristic}, with $\varepsilon=0.95$.}
    \label{fig: iterations}
\end{figure}
 \noindent 
 Let us provide a detailed description of the Figure \ref{fig: iterations}. The red point $\x^{*} = (2,2)$ is the global minimizer. Green curves correspond to the level sets of Christoffel polynomials from the preceding 5 iterations,
 while blue lines indicate the level sets for the current iteration. If the $3 \times 3$ pseudo-moment matrix is of full rank, only solid lines are present. In contrast, dashed lines represent the contribution of polynomials that belong to the kernel of the corresponding pseudo-moment matrix. Green points represent the pseudo-moments of order one associated with the 5 preceding iterations, whereas the blue point depicts the pseudo-moments of order one for the current iteration. 
 Figure \ref{fig: iterations} illustrates how the successively constructed Christoffel polynomials contribute to improving the relaxation bounds from $\widetilde{f}_{1,1} = -2.99999$ to $\widetilde{f}_{1,26} = -2.09245$. This improvement is achieved through informed incremental restrictions of the feasible set $\widetilde{K}_k$, depicted in orange. Furthermore, we observe a clear convergence of the pseudo-moments of order one, $\left(y^{*}_{k,10}, y^{*}_{k,01}\right)_{0 \leq k \leq 25}$, towards the moments of order one associated with the Dirac measure at the point $(2,2)$. Finally, note the reduction in the areas of sublevel sets ${\widetilde{S}_1(\y_k^{*},0.95\gamma_k)}_{0 \leq k\leq 25}$, indicating a decrease in uncertainty about the minimizer's location.

\subsection{Local-solution-based approach - \texorpdfstring{$\hh$}{H2}} 

Unlike the previous heuristic, where the reduction of the feasible set was entirely guided by the information extracted from the relaxation itself, our second heuristic, named $\hh$, explicitly leverages some available solution $\overline{\x}\in\RR^n$ which is \textit{locally optimal}, to determine the shape of the new feasible set. This procedure involves constructing \textit{univariate} or \textit{marginal} Christoffel polynomials to assess the quality of the given local solution and to selectively add  constraints to the initial POP, thereby strengthening the lower bound of the moment relaxations. 
\\
Let $i\in\set{1,\dots,n}$.  We define $\NN^{n{[i]}}_{d}:=\set{\balpha=(\alpha_1,\dots,\alpha_i,\dots,\alpha_n)\in\NN^n_{d}\:\mid\: \alpha_j=0,  \: \forall j\neq i}$. Then, from any sequence $\y\in\RR^{\NN^n_{2d}}$ we can extract the corresponding subsequence $\displaystyle\y_{[i]}\in\RR^{\NN_{2d}^{n{[i]}}}$ representing only the pseudo-moments related to the variable $x_i$. Consequently, ${\bf M}_d(\y_{[i]})$ denotes the \textit{marginal} pseudo-moment matrix of size $(d+1)$, associated to the variable $x_i$. For example, if $n=2, d=1$ and $i=2$, we obtain
\begin{align}
    \y_{[2]}=(1,y_{01},y_{02})\quad\text{and}\quad {\bf M}_1(\y_{[2]})=\begin{bmatrix}
        1& y_{01}\\
        y_{01} & y_{02}
    \end{bmatrix}.
\end{align}
Furthermore, we define the \textit{marginal} Christoffel polynomial associated to the variable $x_i$ to be 
\begin{align}
    \widetilde{\Lambda}_d^{\y_{[i]}}:\RR\ni x_i \mapsto\sum_{j=1}^{d+1}\frac{p_j^2(x_i)}{e_j+\beta}\in\RR_+,
\end{align}
where $\beta>0$, and the orthonormal polynomials $(p_j)_{j\leq d+1}$ and eigenvalues $(e_j)_{j\leq d+1}$ are obtained from the spectral decomposition of ${\bf M}_d(\y_{[i]})$.   Analogously, associated $\gamma$-sublevel sets are denoted by $\widetilde{S}_d(\y_{[i]},\gamma)$.
\\
Let us recall that $\y^{*}\in\RR^{\NN^n_{2d}}$ denotes the optimal pseudo-moment sequence of the moment relaxation at order $d$. Then, $\hh$ relies on constructing $n$ sublevel constraints, one for each coordinate $x_i$. These constraints are derived from the sublevel sets of marginal Christoffel polynomials, with the sublevel threshold $\gamma_i>0$ being defined as the evaluation of the marginal Christoffel polynomial $\widetilde{\Lambda}_1^{\y^{*}_{[i]}}$ at the corresponding coordinate of the local solution $\overline{\x} \in \RR^n$, namely
\begin{align}\label{eq: lambda_i}
    \gamma_i := \widetilde{\Lambda}_1^{\y^{*}_{[i]}}(\overline{x}_i), \;i\in\set{1,\dots,n}.
\end{align}
The effectiveness of this approach depends critically on the quality of the local solution. Bad local solutions can cause the Christoffel polynomials to fail in capturing the correct information. We can partially address these risks by carefully selecting the sublevel sets $\widetilde{S}_1(\y_{[i]},\gamma_i)$ to be intersected with the initial feasible set $K$. To do so, we leverage the following property of the Christoffel polynomials: 
\begin{prop}\label{prop: minimizer}
Let $\y^{*}\in\RR^{\NN^n_{2d}}$ be such that matrix ${\bf M}_1(\y^{*})\in\mathcal{S}_+^{n+1}$ is invertible. Define $\hat{\x}:=(L_{\y^{*}}(x_1),\dots,L_{\y^{*}}(x_n))\in\RR^n. $ Then, the Christoffel polynomial $\Lambda_1^{\y^{*}}$ verifies
\begin{align}
    \min_{\x\in\RR^n}\Lambda_1^{\y^{*}}(\x)=1=\Lambda_1^{\y^{*}}(\hat{\x}).
\end{align}
\end{prop}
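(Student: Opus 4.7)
The plan is to exploit one pivotal structural fact: the first column of ${\bf M}_1(\y^*)$ is precisely the monomial vector $\vv_1(\hat{\x})$. Indeed, the first column consists of entries $y^*_{\balpha}$ for $\balpha\in\NN^n_1$, read in the order $\bm{0}, (1,0,\dots,0), \dots, (0,\dots,0,1)$, which gives $(1, \hat{x}_1, \dots, \hat{x}_n)^\top = \vv_1(\hat{\x})$ using the normalization $y^*_{\bm{0}}=1$ together with the very definition of $\hat{\x}$. Invertibility of ${\bf M}_1(\y^*)$ then yields ${\bf M}_1(\y^*)^{-1}\vv_1(\hat{\x}) = \bm{u}$, where $\bm{u}$ denotes the first standard basis vector of $\RR^{n+1}$.

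From this, the evaluation $\Lambda_1^{\y^*}(\hat{\x})=1$ is immediate: plugging into $\Lambda_1^{\y^*}(\x)=\vv_1(\x)^\top{\bf M}_1(\y^*)^{-1}\vv_1(\x)$ and applying the identity above collapses the expression to the first entry of $\vv_1(\hat{\x})$, which is $1$. For the minimization part, I would note that ${\bf M}_1(\y^*)^{-1}$ is positive definite (inverse of an invertible PSD matrix) and $\vv_1(\x)=(1,\x^\top)^\top$ is affine in $\x$, so $\x \mapsto \Lambda_1^{\y^*}(\x)$ is a strictly convex quadratic with a unique minimizer characterized by $\nabla_\x \Lambda_1^{\y^*}(\x) = 0$. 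A short chain-rule computation shows that this gradient equals twice the last $n$ coordinates of ${\bf M}_1(\y^*)^{-1}\vv_1(\x)$; at $\x = \hat{\x}$ this vector equals $\bm{u}$, whose last $n$ coordinates vanish. Hence $\hat{\x}$ is the unique critical point, and therefore the global minimizer, with minimum value $\Lambda_1^{\y^*}(\hat{\x})=1$.

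There is no real obstacle: everything hinges on the observation that $\vv_1(\hat{\x})$ literally is a column of ${\bf M}_1(\y^*)$, which forces both value-one and gradient-zero at $\hat{\x}$ simultaneously.
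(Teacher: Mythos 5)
Your proof is correct, but it follows a genuinely different route from the paper's. The paper writes ${\bf M}_1(\y^{*})$ in block form $\begin{bmatrix} 1 & \hat{\x}^\top\\ \hat{\x} & {\bf Y}^{*}\end{bmatrix}$, inverts it explicitly via the Schur complement, expands $\Lambda_1^{\y^{*}}(\x)$ as an explicit quadratic in ${\bf Y}^{*-1}$ and $\hat{\x}$, and then solves the stationarity equation through a short chain of algebraic manipulations before substituting $\hat{\x}$ to get the value $1$. You instead observe that the column of ${\bf M}_1(\y^{*})$ indexed by $\balpha=\bm{0}$ is exactly $\vv_1(\hat{\x})$ (using the normalization $y^{*}_{\bm 0}=1$, which is implicit in the paper's proof as well, since it writes the $(1,1)$ entry as $1$), so that ${\bf M}_1(\y^{*})^{-1}\vv_1(\hat{\x})$ is the first standard basis vector; both the value $\Lambda_1^{\y^{*}}(\hat{\x})=1$ and the vanishing of the gradient at $\hat{\x}$ then drop out at once, with strict convexity supplied by positive definiteness of the relevant principal block of ${\bf M}_1(\y^{*})^{-1}$. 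Your argument is shorter, avoids the explicit block inverse and the auxiliary quantity $c=1-\hat{\x}^\top{\bf Y}^{*-1}\hat{\x}$ entirely, and isolates the structural reason the result holds (a monomial vector evaluated at the pseudo-mean is literally a column of the moment matrix); the paper's computation, on the other hand, yields the explicit closed form $\Lambda_1^{\y^{*}}(\x)=\x^\top{\bf Y}^{*-1}\x+c^{-1}(1-\x^\top{\bf Y}^{*-1}\hat{\x})^2$, which is reusable information beyond the statement itself. Both proofs are complete and rigorous.
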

\begin{proof}
    Notice that there exists some ${\bf Y^{*}}\in\mathcal{S}_+^n$ such that ${\bf M}_1(\y^{*}) =\begin{bmatrix}
        1 & \hat{\x}^\top\\
        \hat{\x} & {\bf Y^{*}}
    \end{bmatrix}$. Since ${\bf M}_1(\y^{*})$ is invertible, using the Schur's complement lemma, we deduce that ${\bf Y^{*}}$ is also invertible, and that 
    \begin{align}
        {\bf M}_1(\y^{*})^{-1}=\frac{1}{c}\begin{bmatrix}
            1 & -\hat{\x}^\top{\bf Y^{*}}^{-1}\\
            -{\bf Y^{*}}^{-1}\hat{\x} & {\bf Y^{*}}^{-1}\hat{\x}\hat{\x}^\top{\bf Y^{*}}^{-1} + c{\bf Y^{*}}^{-1}
        \end{bmatrix},
    \end{align}
    with $c=1-\hat{\x}^\top{\bf Y^{*}}^{-1}\hat{\x}>0$. Then, for any $\x\in\RR^n$, the following holds:
    \begin{align}\label{eq: cdk_order2}
        \begin{split} \Lambda^{\y^{*}}_1(\x)&=\vb_1(\x)^\top{\bf M}_1(\y^{*})^{-1}\vb_1(\x)\\
        & = c^{-1} -2c^{-1}\x^\top{\bf Y^{*}}^{-1}\hat{\x}+\x^\top\left(c^{-1}{\bf Y^{*}}^{-1}\hat{\x}\hat{\x}^\top{\bf Y^{*}}^{-1} + {\bf Y^{*}}^{-1}\right)\x\\
        & = \x^\top{\bf Y^{*}}^{-1}\x+c^{-1}\left(1-\x^\top{\bf Y^{*}}^{-1}\hat{\x}\right)^2.
        \end{split}
    \end{align}
    Invertibility of ${\bf Y^{*}}$ implies that $\Lambda^{\y^{*}}_1$ is convex. Hence, we can recover its minimizer $\x^{*}\in\RR^n$ by applying the first-order, necessary and sufficient, condition. The gradient of $\Lambda^{\y^{*}}_1$ is given by:
    \begin{align}
   \nabla\Lambda^{\y^{*}}_1(\x^{*})=2{\bf Y^{*}}^{-1}\x^{*}-2c^{-1}(1-\x^{*\top}{\bf Y^{*}}^{-1}\hat{\x}){\bf Y^{*}}^{-1}\hat{\x},
    \end{align}
    so that 
    \begin{align}
        \begin{split}\nabla\Lambda^{\y^{*}}_1(\x^{*})=0&\implies \x^{*}=c^{-1}(1-\x^{*\top}{\bf Y^{*}}^{-1}\hat{\x})\hat{\x}\\
            &\implies \x^{*\top}{\bf Y^{*}}^{-1}\hat{\x}=c^{-1}\hat{\x}^\top{\bf Y^{*}}^{-1}\hat{\x}-c^{-1}\x^{*\top}{\bf Y^{*}}^{-1}\hat{\x}\hat{\x}^\top{\bf Y^{*}}^{-1}\hat{\x}\\
            &\implies \x^{*\top}{\bf Y^{*}}^{-1}\hat{\x}=\frac{\hat{\x}^\top{\bf Y^{*}}^{-1}\hat{\x}}{c+\hat{\x}^\top{\bf Y^{*}}^{-1}\hat{\x}}\\
            &\implies (\x^{*}-\hat{\x})^\top {\bf Y^{*}}^{-1}\hat{\x}=0 \implies \x^{*}=\hat{\x}.
        \end{split}
    \end{align}
Finally, notice that $\displaystyle \Lambda^{\y^{*}}_1(\hat{\x})= \hat{\x}^\top{\bf Y^{*}}^{-1}\hat{\x}+\frac{\left(1-\hat{\x}^\top{\bf Y^{*}}^{-1}\hat{\x}\right)^2}{1-\hat{\x}^\top{\bf Y^{*}}^{-1}\hat{\x}} = 1,$ which concludes the proof.
\end{proof}
\begin{rem}
In order to simplify the proof, Proposition \ref{prop: minimizer} was stated in terms of $\Lambda^{\y^{*}}_1$, but analogous conclusions can be derived for the regularized function $\widetilde{\Lambda}^{\y^{*}}_1$. In addition, the result from Proposition \ref{prop: minimizer} holds true regardless of whether the vector $\y^{*}$ can be represented by a measure. 
\end{rem}
\noindent
Based on Proposition \ref{prop: minimizer}, if there exists some $i \in \set{1, \dots, n}$ such that the threshold $\gamma_i$ is significantly larger than one, this implies that the coordinate $i$ of the local solution $\overline{\x}$ is substantially far from the relaxation-dependent value $L_{\y^{*}}(x_i)$. This, in turn, suggests either that the local solution $\overline{x}_i$ deviates significantly from $x^{*}_i$, or that the relaxation quality is poor. 
\\
Conversely, values of $\gamma_i$ close to one indicate agreement between the local solver and the relaxation on the potential location of the coordinate $i$ of the true minimizer. In such cases, it appears reasonable to search for the true minimizer within the set 
\begin{align}\label{eq: marginal_constraint}
    \set{\x \in \RR^n \: \mid \: x_i \in \widetilde{S}_1\left(\y^{*}_{[i]}, \gamma_i\right)}.
\end{align}
Thus, we modify the initial feasible set by intersecting it with the sets described in \eqref{eq: marginal_constraint}, but only when the threshold $\gamma_i$ is smaller than a user-defined \textit{filtering parameter} $\tau > 1$.
The steps of this local-solution-based strengthening approach $\hh$ are formalized in the  Algorithm \ref{alg:loc-sol-based-heuristic}.

 
\begin{algorithm}[H]
\caption{Implementing $\hh$}
\label{alg:loc-sol-based-heuristic}
\begin{itemize}[label={}]
\item {\bf Input:} Relaxation order $d$, parameter $\tau > 1$.
\begin{enumerate} 
\item Solve the moment relaxation of order $d$. Recover its optimal solution \(\y^*\) and a local solution $\overline{\x}$.
\item Construct the marginal Christoffel polynomials $\widetilde{\Lambda}_1^{\y^{*}_{[i]}}$ and compute the thresholds $\gamma_i=\widetilde{\Lambda}_1^{\y^{*}_{[i]}}(\overline{x}_i)$, for $i\in\set{1,\dots,n}$.
\item Modify the feasible set via $\widetilde{K} = K \cap \displaystyle\bigcap_{i:\:\gamma_i\leq\tau}\set{\x\in\RR^n \: \mid \: x_i\in \widetilde{S}_1\left(\y^{*}_{[i]}, \gamma_i\right)}$
, and solve the moment relaxation of order $d$.
\end{enumerate}
\item \textbf{Output}: Tightened bound $\widetilde{f}_{d}\geq f_d$.
\end{itemize}
\end{algorithm}
\noindent
Notice that it is possible, for instance, for the local solution $\overline{x}_i$ and the corresponding pseudo-moment $L_{\y^{*}}(x_i)$ to be very close to each other, while both are simultaneously significantly far from the true solution $x^{*}_i$. In this case, adding the associated sublevel constraint could result in obtaining an invalid bound. However, for a carefully tuned filtering value $\tau$, the percentage of such cases is quite small, as suggested by our experimental results from Section \ref{sec:num_exp}. Finally, let us graphically illustrate the behaviour of Algorithm \ref{alg:loc-sol-based-heuristic} 
using Example \ref{ex: Leitmotif}.
\begin{figure}[H]
    \centering
    \begin{subfigure}[b]{0.4975\textwidth}
        \centering
        \includegraphics[width=\textwidth,height=1.05\textwidth]{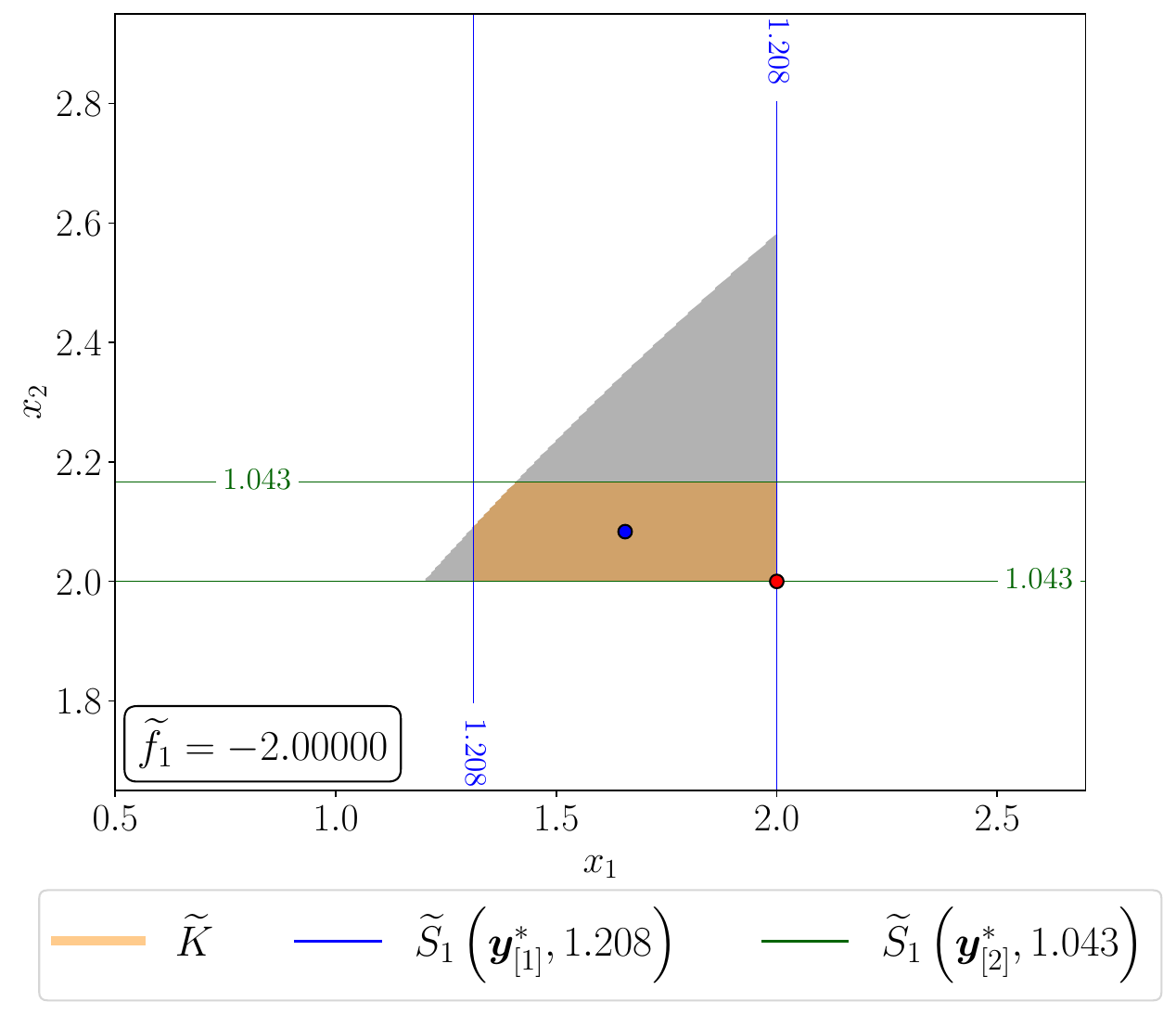}
        \caption{$\tau=1.5$}
    \end{subfigure}
    \hspace{-0.2cm} 
    \begin{subfigure}[b]{0.4975\textwidth}
        \centering
        \includegraphics[width=\textwidth,height=1.05\textwidth]{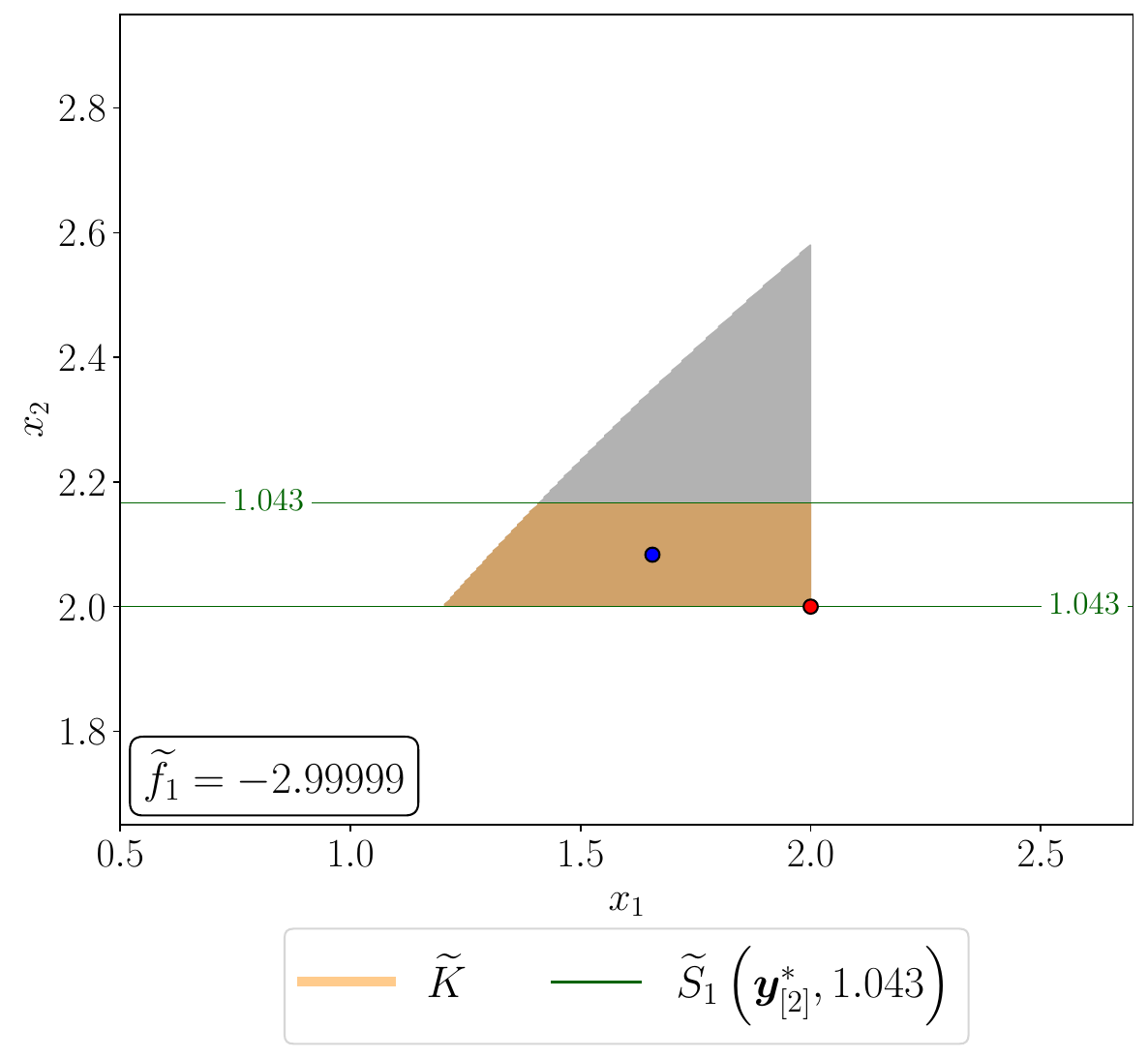}
        \caption{$\tau=1.1$}
    \end{subfigure}
    \caption{Algorithm \ref{alg:loc-sol-based-heuristic} applied to Example \ref{ex: Leitmotif} for different values of $\tau$. The blue point is $\hat{\x}=(1.6562,2.0833)$, and the red point corresponds to $\overline{\x}=\x^{*}=(2,2)$.}
    \label{fig: H2_illustration}
\end{figure}
\noindent
In the case of Example \ref{ex: Leitmotif}, the locally optimal solution to which we had access happened to be globally optimal as well. Moreover, given the previously optimal $\y^{*}$, setting the filtering parameter to $\tau=1.5$, for instance, results in both $\widetilde{S}_1(\y^{*}_{[1]},1.208)$ and $\widetilde{S}_1(\y^{*}_{[2]},1.043)$ being intersected with the initial feasible set, since both $\gamma_1$ and $\gamma_2$ are smaller than $\tau=1.5$. Consequently, the new relaxation bound improves significantly, yielding $\widetilde{f}_1 = -2$. In contrast, setting $\tau = 1.1$ results in only $\widetilde{S}_1(\y^{*}_{[2]}, 1.043)$ being considered, as the certainty regarding the location of $x_1$ is not deemed sufficient. This, in turn, leads to a less restrictive modification of the initial feasible set, resulting in no improvement in the bound, i.e., $\widetilde{f}_1 = -3$.

\section{Numerical Experiments}
\label{sec:num_exp}

\noindent
In this section, we present the numerical results of applying our heuristics to different classes of polynomial optimization problems with inexact relaxations of order $d$.
\hfill\break\\
All experiments were performed on a computer with a 13th Gen Intel Core i7-13620H CPU @ 2.40 GHz, 10 cores, 16 logical processors, and 32GB of RAM. 
Exact optimal values of all studied optimization problems were computed on one of the \href{https://www.calmip.univ-toulouse.fr/}{CALMIP} servers with 2 Intel Xeon Gold 6140 CPUs @ 2.30 GHz, with a total of 72 logical processors (18 cores per socket, 2 threads per core) and a RAM of 600GB. 
All the studied POP instances were modeled using the Julia library TSSOS \cite{Magron2021TSSOSAJ}, and their corresponding Moment-SOS relaxations were solved with Mosek \cite{andersen2000mosek}. Locally optimal solutions were obtained using the local solver Ipopt \cite{Wachter2006ipopt}.
The numerical implementation of our algorithms, along with the corresponding experimental results, are available in this GitHub repository: \href{https://github.com/SoDvc2226/CDK_Bound_Strengthening}{\texttt{CDK\_Bound\_Strengthening}.
}

\subsection{QCQP over \texorpdfstring{$[0,1]^n$}{}}\label{exp: dense cases}
The first part of this section focuses on quadratically constrained quadratic programs (QCQPs) of the following form:  
\begin{equation}\label{QCQP-classic}  
\min_{\x\in\mathbb{R}^{n}} \set{ \x^\top {\bf Q} \x + \x^\top \q \mid \textbf{0}\leq \x\leq\textbf{1} },  
\end{equation} 
where ${\bf Q}\in\mathcal{S}^n$  and $\q\in \RR^n$.
\noindent
The first-order Moment-SOS relaxation of these problems is not always exact, which provides an interesting benchmark for testing the effectiveness of our heuristics. We consider two classes of input data $(\mathbf{Q},\mathbf{q})$, distinguished by the percentage $s$ of zero entries they contain.
\begin{enumerate}
\item $(\mathbf{Q},\mathbf{q})$ are randomly generated from $\mathcal{N}(0,1)$, with $s=0\%$;  
\item $(\mathbf{Q},\mathbf{q})$ are randomly generated from $\mathcal{N}(0,1)$, with $s=20\%$.
\end{enumerate}
We create $50$ instances of these problems by varying the random seeds, considering two different dimensions: $n = 20$, and $n = 30$.
For the moment, we restrict ourselves to medium-size dimensions to ensure access to the true optimal values of the associated POPs, since those true values are required to assess the efficiency of our methods. In scenarios where problems \textit{lack inherent structure} 
---as is the case here---computing exact optimal values in higher dimensions is neither reasonable nor practical. Table \ref{tab:notations} contains explanations of different metrics that were used to assess the performance of our methods. 
\begin{table}[H]
    \centering
\begin{tabular}{|c|p{0.7\textwidth}|}
        \hline
        \textbf{Notation} & \textbf{Description} \\ \hline
        $\widetilde{f}_1 > f_{\min}$ & Number of instances (out of $50$) where the method over-restricted the feasible set, leading to an \textit{invalid} (upper) bound. \\ \hline
        Gap (\%) & 
        Relative optimality gaps computed before ($\Delta(f_1, f_{\min})$) and after ($\Delta(\widetilde{f}_1, f_{\min}))$ applying the heuristic. \\ \hline
        Time ($\mathrm{s}$) & Computational time (in seconds) required for solving the second-order relaxation ($f_2$) and for computing the heuristic bounds ($\widetilde{f}_1$). \\ \hline
        Solved & Number of instances where 
        $\Delta(\widetilde{f}_1, f_{\min})$ is below the tolerance $\delta \%$. \\ \hline
        $k$ & Number of iterations from the Algorithm \ref{alg:iteration-based-heuristic}. \\ \hline
    \end{tabular}
    \caption{Summary of column names used in Tables \ref{tab:H1-Qcombined}, \ref{tab:H2-Qcombined}, \ref{tab: H1 cs} and \ref{tab:H2 - Q cs}. Columns ``Gap (\%)'', ``Time ($\mathrm{s}$)" and ``$k$" represent \textit{averages} over the instances for which a \textit{valid} post-heuristic bound $\widetilde{f}_1$ was obtained.}
    \label{tab:notations}
\end{table}
\subsubsection{Performance of \texorpdfstring{$\HH$}{H1}}
We start by assessing the performance of $\HH$, implemented via Algorithm \ref{alg:iteration-based-heuristic}, where we set optimality gap tolerance to be $\delta=0.5\%$, and a maximum number of iterations $N=15$. The kernel dimension was determined by counting the number of eigenvalues of the corresponding moment matrices smaller than $10^{-3}$. The regularization parameter for computing the Christoffel sublevel sets was set to $\beta = 10^{-5}$.
\begin{table}[H]
    \centering \small
    \renewcommand{\arraystretch}{1.7} 
    \begin{tabular}{|c|c|c|c|c|c|c|c|c|c|}
        \hline
        \textbf{$n$} & \textbf{$s$} & \textbf{$\varepsilon$} & \textbf{$\widetilde{f}_1 > f_{\min}$} & \multicolumn{2}{c|}{Gap (\%)} & \multicolumn{2}{c|}{Time $(\mathrm{s})$} & Solved & \textbf{$k$} \\
        \cline{5-8}
         &  &  &  & $\Delta(f_1, f_{\min})$ & $\Delta(\widetilde{f}_1, f_{\min})$ & $\widetilde{f}_1$ & $f_2$ &  &  \\
        \hline\addlinespace\hline
        \multirow{8}{*}{\rotatebox{90}{20}} 
        & \multirow{4}{*}{\rotatebox{90}{$0\:\%$}} 
                & $0.01$ & \cellcolor{gray!15}1 & \cellcolor{gray!15}4.853 & \cellcolor{gray!15}2.755 & \cellcolor{gray!15}0.397 & \cellcolor{gray!15}26.256 & \cellcolor{gray!15}13 & \cellcolor{gray!15}13.80 \\ \cline{3-10}
        &  & $0.05$ & \cellcolor{gray!15}4 & \cellcolor{gray!15}4.649 & \cellcolor{gray!15}1.687 & \cellcolor{gray!15}0.253 & \cellcolor{gray!15}26.387 & \cellcolor{gray!15}20 & \cellcolor{gray!15}8.09 \\ \cline{3-10}
        &  & $0.1$  & \cellcolor{gray!15}2 & \cellcolor{gray!15}4.757 & \cellcolor{gray!15}1.629 & \cellcolor{gray!15}0.221 & \cellcolor{gray!15}26.404 & \cellcolor{gray!15}16 & \cellcolor{gray!15}6.65 \\ \cline{3-10}
        &  & $0.15$ & \cellcolor{gray!15}2 & \cellcolor{gray!15}4.757 & \cellcolor{gray!15}2.132 & \cellcolor{gray!15}0.161 & \cellcolor{gray!15}26.404 & \cellcolor{gray!15}11 & \cellcolor{gray!15}5.02 \\ \cline{2-10}
               & \multirow{4}{*}{\rotatebox{90}{$20\:\%$}} 
                & $0.01$ & 2 & 4.668 & 2.591 & 0.407 & 26.659 & 13 & 14.10 \\ \cline{3-10}
        &  & $0.05$ & 3 & 4.540 & 1.256 & 0.259 & 26.727 & 22 & 8.43 \\ \cline{3-10}
        &  & $0.1$  & 4 & 4.415 & 1.379 & 0.195 & 26.739 & 20 & 6.37 \\ \cline{3-10}
        &  & $0.15$ & 4 & 4.304 & 1.732 & 0.136 & 26.733 & 14 & 4.28 \\ 
\hline\addlinespace\hline
        \multirow{8}{*}{\rotatebox{90}{30}} 
        & \multirow{4}{*}{\rotatebox{90}{$0\:\%$}} 
                & $0.01$ & \cellcolor{gray!15}0 & \cellcolor{gray!15}5.419 & \cellcolor{gray!15}3.962 & \cellcolor{gray!15}0.869 & \cellcolor{gray!15}1206.587 & \cellcolor{gray!15}9 & \cellcolor{gray!15}14.00 \\ \cline{3-10}
        &  & $0.05$ & \cellcolor{gray!15}4 & \cellcolor{gray!15}5.318 & \cellcolor{gray!15}2.361 & \cellcolor{gray!15}0.728 & \cellcolor{gray!15}1211.969 & \cellcolor{gray!15}16 & \cellcolor{gray!15}10.98 \\ \cline{3-10}
        &  & $0.1$  & \cellcolor{gray!15}5 & \cellcolor{gray!15}5.404 & \cellcolor{gray!15}2.004 & \cellcolor{gray!15}1.007 & \cellcolor{gray!15}1209.611 & \cellcolor{gray!15}13 & \cellcolor{gray!15}10.16 \\ \cline{3-10}
        &  & $0.15$ & \cellcolor{gray!15}6 & \cellcolor{gray!15}5.132 & \cellcolor{gray!15}1.749 & \cellcolor{gray!15}0.829 & \cellcolor{gray!15}1208.643 & \cellcolor{gray!15}14 & \cellcolor{gray!15}7.36 \\ \cline{2-10}
        & \multirow{4}{*}{\rotatebox{90}{$20\:\%$}} 
               & $0.01$ & 0 & 6.059 & 4.784 & 0.899 & 1163.134 & 2 & 14.84 \\ \cline{3-10}
        &  & $0.05$ & 2 & 5.959 & 2.927 & 0.870 & 1165.358 & 12 & 13.04 \\ \cline{3-10}
        &  & $0.1$  & 2 & 5.871 & 2.300 & 0.819 & 1165.310 & 13 & 11.94 \\ \cline{3-10}
        &  & $0.15$ & 9 & 5.074 & 1.790 & 0.566 & 1172.107 & 10 & 8.41 \\ \hline
    \end{tabular}
    \caption{Performance of $\HH$ in dimensions $20$ and $30$, for different values of the penalization factor $\varepsilon$.}
    \label{tab:H1-Qcombined}
\end{table}
\noindent
From Table \ref{tab:H1-Qcombined}, we observe that when $\varepsilon $ is small (e.g., $\varepsilon =0.01$), the method progresses cautiously, requiring on average a larger number of iterations, regardless of the dimension. On the other hand, moderate values of $\varepsilon $ (e.g., $\varepsilon = 0.05$ or $\varepsilon = 0.1$) significantly improve optimality gaps while inducing smaller running times. Finally, the feasible set restrictions imposed when $\varepsilon = 0.15$ appear to be overly aggressive, as evidenced by the fact that the number of \textit{Solved} cases is never maximized for this value. 
\\\noindent
Regardless of the choice of $\varepsilon$ and the dimension $n$, the average post-heuristic gaps decrease significantly, while the number of cases yielding an \textit{invalid} (upper) bound remains relatively small. Moreover, note that random instances from Table \ref{tab:H1-Qcombined} with $20\%$ zero entries appear to be harder to solve compared to the dense ones, as indicated by their relatively higher pre-heuristic gaps. Nonetheless, $\HH$ successfully reduces these gaps by at least half in seven out of eight parameter configurations.
\\
Furthermore, we observe that moderate choices of the penalization factor, such as $\varepsilon = 0.05$ or $\varepsilon = 0.1$, typically solve the highest number of cases to optimality much before reaching the maximum number of iterations. In other words, for these values of $\varepsilon$, Algorithm \ref{alg:iteration-based-heuristic} appears to be able to certify that further reductions of the feasible set are unnecessary, as they would either yield invalid (upper) bounds or fail to significantly reduce the gap.
\\
Finally, we highlight the efficiency of $\HH$ compared to the computationally expensive second-order relaxation. For example, when $n = 30$ and $s=20\%$, recovering the true optimal values using the second-order relaxation takes an average of $1163.134 \:\mathrm{s}$. In contrast, Algorithm \ref{alg:iteration-based-heuristic} with $\varepsilon = 0.1$ takes only $0.819 \:\mathrm{s}$, solving 13 cases and reducing the average gap by half. 
Additionally, it is worth emphasizing that, despite requiring multiple iterations,  $\HH$ remains much faster than solving the second-order relaxation. 
\hfill\break\\
The following figure illustrates the impact of the choice of penalization factor $\varepsilon$ on the speed at which the relaxation bound improves:
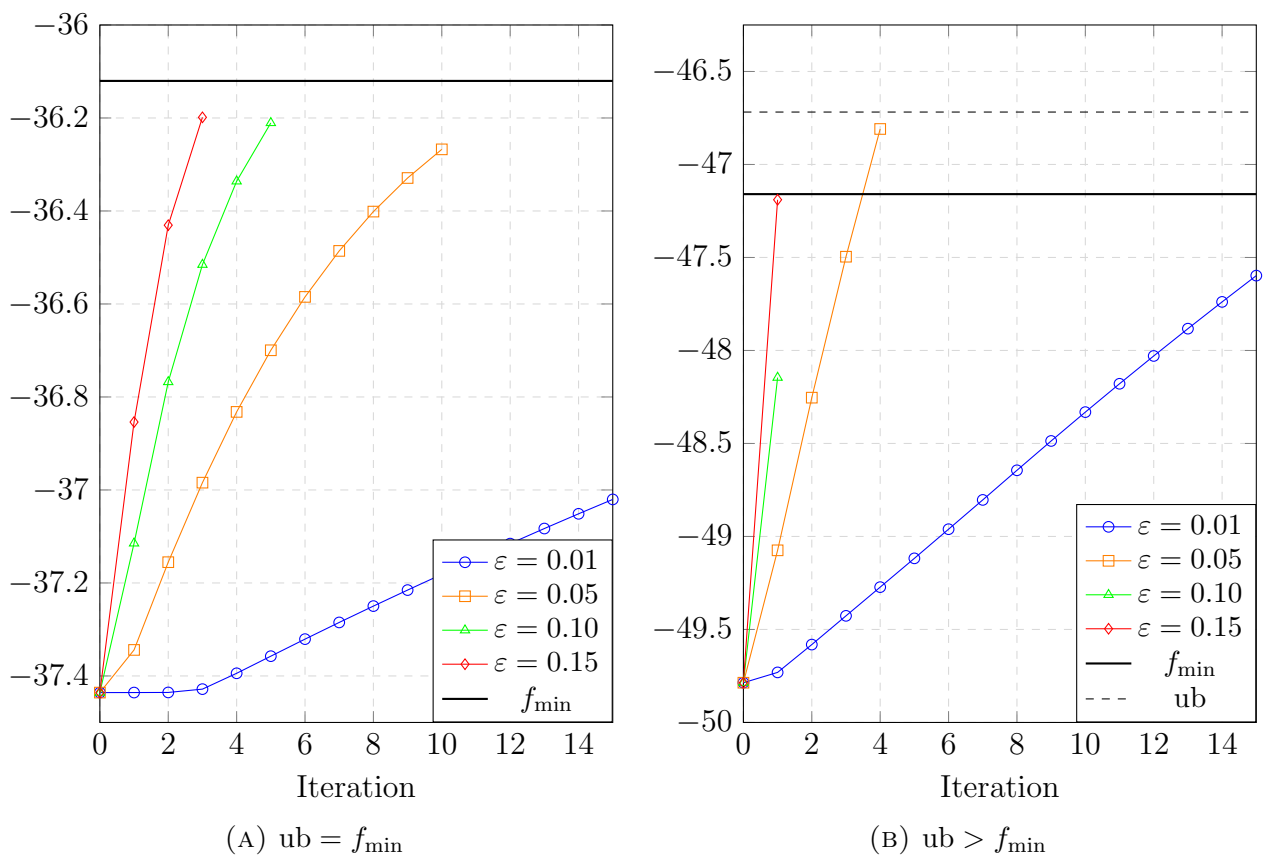
\begin{figure}[H]
    \centering
\begin{subfigure}{0.49\textwidth}
    \begin{tikzpicture}
    \begin{axis}[
        width=\textwidth,
        height=1.3\textwidth,
        xlabel={Iteration},
        legend style={at={(1,0)}, anchor=south east, font=\small}, 
        grid=major,
        grid style={dashed,gray!30},
        ymin=-37.5, ymax=-36,
        xmin=0, xmax=15,
    ]
    \addplot[mark=o, color=blue] coordinates {
        (0, -37.435772) (1, -37.43569) (2, -37.435455) (3, -37.42834)
        (4, -37.394283) (5, -37.357459) (6, -37.320785) (7, -37.284896)
        (8, -37.249754) (9, -37.215301) (10, -37.181475) (11, -37.148204)
        (12, -37.115368) (13, -37.082949) (14, -37.051269) (15, -37.020327)
    };
    \addlegendentry{$\varepsilon=0.01$}

    \addplot[mark=square, color=orange] coordinates {
        (0, -37.435772) (1, -37.344035) (2, -37.155315) (3, -36.984296)
        (4, -36.832113) (5, -36.699565) (6, -36.584808) (7, -36.485886)
        (8, -36.4013) (9, -36.329079) (10, -36.267473)
    };
    \addlegendentry{$\varepsilon=0.05$}

    \addplot[mark=triangle, color=green] coordinates {
        (0, -37.435772) (1, -37.114725) (2, -36.767737) (3, -36.515388)
        (4, -36.336113) (5, -36.210619)
    };
    \addlegendentry{$\varepsilon=0.10$}

    \addplot[mark=diamond, color=red] coordinates {
        (0, -37.435772) (1, -36.853915) (2, -36.430542) (3, -36.198609)
    };
    \addlegendentry{$\varepsilon=0.15$}

    \addplot[black, thick] coordinates {(0, -36.12029578315779) (15, -36.12029578315779)};
    \addlegendentry{$f_{\mathrm{min}}$}
    \end{axis}
    \end{tikzpicture}
    \caption{$\operatorname{ub}=f_{\mathrm{min}}$}
\end{subfigure}
\begin{subfigure}{0.49\textwidth}
    \begin{tikzpicture}
    \begin{axis}[
        width=\textwidth,
        height=1.3\textwidth,
        xlabel={Iteration},
        legend style={at={(1,0)}, anchor=south east, font=\small}, 
        grid=major,
        grid style={dashed,gray!30},
        ymin=-50, ymax=-46.25,
        xmin=0, xmax=15,
    ]
    \addplot[mark=o, color=blue] coordinates {
        (0, -49.786399) (1, -49.730886) (2, -49.581332) (3, -49.426782)
        (4, -49.272441) (5, -49.117707) (6, -48.960973) (7, -48.803487)
        (8, -48.64427) (9, -48.486539) (10, -48.331449) (11, -48.179045)
        (12, -48.02952) (13, -47.882813) (14, -47.738865) (15, -47.597578)
    };
    \addlegendentry{$\varepsilon=0.01$}

    \addplot[mark=square, color=orange] coordinates {
        (0, -49.786399) (1, -49.074489) (2, -48.25373) 
        (3, -47.495936) (4, -46.809186)
    };
    \addlegendentry{$\varepsilon=0.05$}

    \addplot[mark=triangle, color=green] coordinates {
        (0, -49.786399) (1, -48.145749)
    };
    \addlegendentry{$\varepsilon=0.10$}

    \addplot[mark=diamond, color=red] coordinates {
        (0, -49.786399) (1, -47.189737)
    };
    \addlegendentry{$\varepsilon=0.15$}

    \addplot[black, thick] coordinates {(0, -47.159442988247534) (15, -47.159442988247534)};
    \addlegendentry{$f_{\mathrm{min}}$}

    \addplot[black, dashed] coordinates {(0, -46.71817328139224) (15, -46.71817328139224)};
    \addlegendentry{$\operatorname{ub}$}
    \end{axis}
    \end{tikzpicture}
    \caption{$\operatorname{ub}>f_{\min}$}
\end{subfigure}
\caption{Trajectories~$(\widetilde{f}_{1,k})_{0 \leq k \leq 15}$ from Algorithm~\ref{alg:iteration-based-heuristic} for different values of $\varepsilon$, in dimension $n=20$, with  $(\mathbf{Q},\q)$ dense and randomly generated, using seeds 16 (left) and 38 (right).
} \label{fig: H1_illustration}
\end{figure}
\noindent
The left subfigure of Figure \ref{fig: H1_illustration} demonstrates that a valid lower bound was obtained for all tested values of $\varepsilon$. It also reveals that smaller values of $\varepsilon$ reduce the speed at which the bound improves. In contrast, the {right subfigure of Figure \ref{fig: H1_illustration} shows a particular instance where an invalid (upper) bounds was obtained for $\varepsilon=0.05$. Unlike the left subfigure, Algorithm \ref{alg:iteration-based-heuristic} failed to recover a tight upper bound $\operatorname{ub}$ in this case, leading to inaccurate measurements of the relative optimality gap. 
Despite this issue, for more severe penalization factors, such as  $\varepsilon=0.15$ or $\varepsilon=0.1$,  $\HH$ was able to certify that for any $k \geq 2$, we must have $\widetilde{f}_{1,k} > f_{\min}$. As a result, the best proposed bound $\widetilde{f}_{1,1}$ satisfied $f_1 < \widetilde{f}_{1,1} < f_{\min}$. Moreover, cautious reductions of the feasible set generally avoid producing invalid (upper) bounds, though at the expense of requiring a higher number of iterations. This observation holds regardless of the availability of tight upper bounds.

\subsubsection{Performance of \texorpdfstring{$\hh$}{H2}}
We continue by assessing the performance of $\hh$, implemented via Algorithm \ref{alg:loc-sol-based-heuristic}. The kernel dimension of the marginal moment matrices was determined by counting the number of eigenvalues smaller than $10^{-3}$. The regularization parameter for computing the marginal Christoffel sublevel sets was set to $\beta = 10^{-3}$.
\begin{table}[H]
\centering \small
\renewcommand{\arraystretch}{1.7}
\begin{tabular}{|c|c|c|c|c|c|c|c|c|}
    \hline
    \multirow{2}{*}{$n$} & \multirow{2}{*}{$s$} & \multirow{2}{*}{$\tau$} & \multirow{2}{*}{$\widetilde{f}_1>f_{\min}$} & \multicolumn{2}{c|}{Gap (\%)} & \multicolumn{2}{c|}{Time ($\mathrm{s}$)} & \multirow{2}{*}{Solved} \\ 
    \cline{5-6} \cline{7-8}
    & & & & $\Delta(f_1,f_{\min})$ & $\Delta(\widetilde{f}_1,f_{\min})$ & $\widetilde{f}_1$ & $f_2$ & \\
    \hline
    \addlinespace
    \hline
    \multirow{8}{*}{\rotatebox{90}{$20$}} 
        & \multirow{4}{*}{\rotatebox{90}{$0\:\%$}} 
        & None  & \cellcolor{gray!15}4 & \cellcolor{gray!15}4.834 & \cellcolor{gray!15}0.410 & \cellcolor{gray!15}0.083 & \cellcolor{gray!15}26.339 & \cellcolor{gray!15}39 \\    
    \cline{3-9}
        &  & $1.5$  & \cellcolor{gray!15}4 & \cellcolor{gray!15}4.834 & \cellcolor{gray!15}0.588 & \cellcolor{gray!15}0.086 & \cellcolor{gray!15}26.339 & \cellcolor{gray!15}35 \\
    \cline{3-9}
        &  & $1.25$ & \cellcolor{gray!15}1 & \cellcolor{gray!15}4.933 & \cellcolor{gray!15}1.212 & \cellcolor{gray!15}0.085 & \cellcolor{gray!15}26.311 & \cellcolor{gray!15}28 \\
    \cline{3-9}
        &  & $1.1$  & \cellcolor{gray!15}0 & \cellcolor{gray!15}4.934 & \cellcolor{gray!15}2.314 & \cellcolor{gray!15}0.094 & \cellcolor{gray!15}26.268 & \cellcolor{gray!15}19 \\
    \cline{2-9}
        & \multirow{4}{*}{\rotatebox{90}{$20\:\%$}} 
        & None  & 7 & 4.317 & 0.268 & 0.026 & 26.793 & 38 \\    
    \cline{3-9}
        &  & $1.5$  & 5 & 4.395 & 0.391 & 0.045 & 26.734 & 36 \\
    \cline{3-9}
        &  & $1.25$ & 1 & 4.740 & 0.878 & 0.044 & 26.560 & 29 \\
    \cline{3-9}
        &  & $1.1$  & 1 & 4.740 & 2.177 & 0.045 & 26.560 & 17 \\
    \hline
    \addlinespace
    \hline
    \multirow{8}{*}{\rotatebox{90}{$30$}} 
        & \multirow{4}{*}{\rotatebox{90}{$0\:\%$}} 
        & None  & \cellcolor{gray!15}11 & \cellcolor{gray!15}5.092 & \cellcolor{gray!15}0.525 & \cellcolor{gray!15}0.304 & \cellcolor{gray!15}1209.983 & \cellcolor{gray!15}32 \\    
    \cline{3-9}
        &  & $1.5$  & \cellcolor{gray!15}6 & \cellcolor{gray!15}5.453 & \cellcolor{gray!15}0.757 & \cellcolor{gray!15}0.296 & \cellcolor{gray!15}1202.457 & \cellcolor{gray!15}27 \\
    \cline{3-9}
        &  & $1.25$ & \cellcolor{gray!15}4 & \cellcolor{gray!15}5.566 & \cellcolor{gray!15}1.484 & \cellcolor{gray!15}0.306 & \cellcolor{gray!15}1203.910 & \cellcolor{gray!15}18 \\
    \cline{3-9}
        &  & $1.1$  & \cellcolor{gray!15}2 & \cellcolor{gray!15}5.527 & \cellcolor{gray!15}2.781 & \cellcolor{gray!15}0.323 & \cellcolor{gray!15}1207.392 & \cellcolor{gray!15}11 \\
    \cline{2-9}
                & \multirow{4}{*}{\rotatebox{90}{$20\:\%$}} 
        & None  & 5 & 6.059 & 0.740 & 0.094 & 1172.473 & 30 \\    
    \cline{3-9}
        &  & $1.5$  & 2 & 6.030 & 1.022 & 0.087 & 1167.344 & 29 \\
    \cline{3-9}
        &  & $1.25$ & 0 & 6.059 & 1.780 & 0.093 & 1163.134 & 12 \\
    \cline{3-9}
        &  & $1.1$  & 0 & 6.059 & 3.364 & 0.092 & 1163.134 & 5 \\
    \hline
\end{tabular}
\caption{Performance of $\hh$ in dimensions $20$ and $30$, for different values of the filtering parameter $\tau$ (``None'' corresponds to cases where no filtering is used).}
\label{tab:H2-Qcombined}
\end{table}
\noindent
Firstly, notice that the runtime of Algorithm \ref{alg:loc-sol-based-heuristic} is usually smaller compared to the running time of Algorithm \ref{alg:iteration-based-heuristic} over the same POP instances. This is due to the fact that Algorithm \ref{alg:loc-sol-based-heuristic} requires only one iteration. 
However, Algorithm \ref{alg:loc-sol-based-heuristic} is highly dependent on the quality of the available local solutions. For example,  when $n=30$ and $s=0\%$, 15 out of 50 available local solutions yielded strict upper bounds, and Algorithm \ref{alg:loc-sol-based-heuristic}, without enabled filtering, provided invalid (upper) bounds in 11 out of those 15 cases. 
\\\noindent 
The number of those undesirable scenarios can be reduced by cautiously filtering the sublevel constraints, which happens, for instance, when $\tau=1.1$. However, this conservative approach yields significantly smaller average gap improvement, illustrating the trade-off between the risk of over-restriction and the possibility of highly significant bound improvements. \\
\hfill\break
To summarize, both $\HH$ and $\hh$ effectively strengthen the bounds of moment relaxations while being significantly less computationally prohibitive than higher-order relaxations. The reductions of the feasible set induced by $\HH$ are gradual and thus more cautious, particularly for small values of $\varepsilon$. Due to the weaker dependence of $\HH$ on the quality of available upper bounds, we conjecture that $\HH$ would be preferable for general classes of more challenging problems, where obtaining high-quality locally optimal solutions is difficult. Conversely, for specific classes of POP problems where high-quality local solutions are readily available, strengthening the bounds via $\hh$ would be a highly reasonable approach.

\subsection{Exploiting correlative sparsity}\label{exp: sparse case} 
So far, we have only considered POP instances that present no underlying structure that could be leveraged to increase the efficiency of the associate SDP solving procedure. However, many optimization problems possess important structural properties, such as sparsity. In this subsection, we focus on \textit{correlative sparsity} \cite{waki2006sums,LasserreCSP2006} and demonstrate the efficiency of our proposed approaches within this framework.
\\
Before presenting our experimental results, let us provide a short reminder of the correlative-sparsity-based Moment-SOS hierarchy.
\\
Let us suppose that $\set{1,\dots,n}=:I_0=\cup_{l=1}^p I_l$ with $I_l$ not necessarily disjoint and let $n_l= | I_l | $. To each subset $I_l$, 
we can associate the corresponding subset of decision variables $\x_{I_l}:=\{x_i, i\in I_l\}$. We say that an instance of POP exhibits correlative sparsity if
\begin{itemize}
    \item There exist $(f_l)_{l\in\set{1,\dots,p}}$ such that $\displaystyle f=\sum_{l=1}^p f_l$, with $f_l\in\mathbb{R}[\bm{x}_{I_l}]$, 
    \item  The polynomials $(g_j)_{j\in\set{1,\dots,m}}$ can be split into disjoints sets $J_l$, such that $g_j\in J_l$ if and only if  $g_j\in\mathbb{R}[\bm{x}_{I_l}]$. 
\end{itemize}
Then, the \textit{correlatively sparse moment relaxatio}n of order $d\geq d_{\min}$ is given by:
\begin{align}
    \label{eq:prima_cs}
    {\bf P}_d^{\operatorname{cs}}: \quad
    \begin{cases}
          \displaystyle f_d^{\operatorname{cs}}:=\inf_{\y_l \in \RR^{\NN^{n_l}_{2d}}}\quad & \displaystyle\sum_{l=1}^pL_{\y_l}(f_l)\\
        \text{s.t.} \quad &{\bf M}_d(\y_l) \in \mathcal{S}_+^{s(n_l, d)}, \quad l\in\set{1,\dots,p},\\
        &  {\bf M}_{d-d_j}(g_j\y_l) \in \mathcal{S}_+^{s(n_l,d-d_j)}, \quad j\in J_l, \quad l\in\{1,\dotsc,p\},\\
        & L_{y_l}(1)=1, \quad l\in\{1,\dotsc,p\},
    \end{cases}
\end{align}
where we let $\y_l :=\set{y_{\balpha} \: \mid \balpha\in\NN^n_{2d}, \: \forall i\in\set{1,\dots,s(n,d)}, \: \alpha_i=0, \: \forall i\notin I_l}$. 
The corresponding SOS relaxation can be built analogously. A detailed convergence analysis of the hierarchy from \eqref{eq:prima_cs} is available in \cite{LasserreCSP2006}.
\\
Notice that, if $\displaystyle\rho:=\max_{l\in\set{1,\dots,p}} n_l$, then SDP relaxations from \eqref{eq:prima_cs} contain matrices of maximum size $s(\rho,d)=\mathcal{O}(\rho^{d})$, yielding a significant improvement when $\rho \ll n$. However, given a relaxation order $d\geq d_{\min}$, the bound $f^{\cs}_d$ can be arbitrarily far from $f_{\min}$. 
In high-dimensional settings, where even the size of $\rho$ is not negligible, improving the bound by ascending the hierarchy (e.g., computing $f^{\cs}_{d+1}$, $f^{\cs}_{d+2}$, and so on) remains a significant computational challenge. Nonetheless, we demonstrate that our proposed approaches are capable of providing strengthened bounds $\widetilde{f}^{\cs}_d$ within this framework.
\begin{rem}
  Both $\HH$ and $\hh$ can be efficiently integrated with correlative sparsity exploitation. The modified versions, referred to as $\HHCS$ and $\hhcs$, are detailed in the \hyperref[sec:appendix]{Appendix}.
\end{rem}
\noindent
Let us consider the following optimization problem:
\begin{align}\label{QCQP: sparse}
    \min_{\x\in\RR^n}\set{\x^\top \q + \sum_{l=1}^p \x_{I_l}^\top{\bf Q}_l\x_{I_l} \mid \textbf{0}\leq \x\leq\textbf{1} },
\end{align}
where ${\bf Q}_l\in\mathcal{S}^{n_l}$ for each $l\in\set{1,\dots,p}$, $\q\in \RR^n$, and $\displaystyle n = \sum_{l=1}^{p} n_l-\sum_{l=1}^{p-1} \abs{I_l\cap I_{l+1}}$. We suppose that $\abs{I_l \cap I_{l+1}}>0$ for all $l\in\set{1,\dots,p-1}$. \\
\noindent
 Notice that POP instances of the form \eqref{QCQP: sparse} possess a correlatively sparse structure by construction. These instances can also be written in the form \eqref{QCQP-classic}, where the matrix ${\bf Q} \in \mathcal{S}^n$ is block-diagonal, having $p$ blocks, with each block associated with a distinct subset $I_l$.
 Moreover, solving optimization problems of this form naturally arises in many applications, such as portfolio optimization \cite{Gondzio2007} or classification via support vector machines \cite{SVMQCQP2015}. 
\hfill\break\\
We generate 50 such POP instances using the following structure: $p=7$, $n_l=22$ for $l\in\set{1,\dots,p}$, and  $\abs{I_l \cap I_{l+1}}=4$ for $l\in\set{1,\dots,p-1}$. Consequently, we get $n=130$. We set $\delta=0.5\%$ and $\beta=10^{-4}$. Obtained results are presented in Table \ref{tab: H1 cs} and  Table \ref{tab:H2 - Q cs}.
\begin{table}[H]
    \centering \small
    \renewcommand{\arraystretch}{1.7}
    \begin{tabular}{|c|c|c|c|c|c|c|c|c|}
        \hline
        \multirow{2}{*}{$n$} & \multirow{2}{*}{$\varepsilon$} & \multirow{2}{*}{$\widetilde{f}^{\cs}_1 > f_{\min}$} & \multicolumn{2}{c|}{Gap (\%)} & \multicolumn{2}{c|}{Time $(\mathrm{s})$} & \multirow{2}{*}{Solved} & \multirow{2}{*}{$k$} \\
\cline{4-7}
         &  &  & $\Delta(f_1^{\cs}, f_{\min})$ & $\Delta(\widetilde{f}^{\cs}_1, f_{\min})$ & $\widetilde{f}^{\cs}_1$ & $f_2^{\cs}$ & & \\
\hline
    \addlinespace
    \hline
        \multirow{5}{*}{$125$}
        &$0.01$ & 0 & 5.890 & 4.670 & 2.939 & 468.294 & 0 & 15.00 \\ \cline{2-9}
        & $0.05$ & 10 & 5.757 & 1.359 & 2.706 & 469.012 & 15 & 13.12 \\ \cline{2-9}
        & $0.1$  & 13 & 5.764 & 0.551 & 1.650 & 466.427 & 21 & 8.35 \\ \cline{2-9}
        & $0.15$ & 11 & 5.661 & 1.073 & 1.080 & 468.437 & 12 & 5.15 \\ \cline{2-9}
        & $0.25$ & 8 & 5.851 & 1.808 & 0.952 & 469.787 & 8 & 2.69 \\ \hline
    \end{tabular}
    \caption{Performance of $\HHCS$ for different values of the penalization factor $\varepsilon$, and randomly generated input data. The number of cases where $\HHCS$ has access only to a strict upper bound is $41$ out of $50$. }
    \label{tab: H1 cs}
\end{table}
\noindent
Firstly, notice that, despite correlative sparsity exploitation, second-order moment relaxation remains costly with respect to $\HHCS$. Moreover, $\HHCS$ effectively reduces the optimality gaps in all considered configurations. Given the high-dimensionality of the problem, a more severe penalization, like $\varepsilon=0.25$, also seems efficient in reducing the gaps while maintaining the number of wrong cases relatively small.
\begin{table}[H]
\small
\centering\renewcommand{\arraystretch}{1.7}
\begin{tabular}{|c|c|c|c|c|c|c|c|}
    \hline
    \multirow{2}{*}{$n$} & \multirow{2}{*}{$\tau$} & \multirow{2}{*}{$\widetilde{f}^{\cs}_1>f_{\min}$} & \multicolumn{2}{c|}{Gap (\%)} & \multicolumn{2}{c|}{Time ($\mathrm{s}$)} & \multirow{2}{*}{Solved} \\ 
    \cline{4-5} \cline{6-7}
    & & & $\Delta(f_1,f_{\min})$ & $\Delta(\widetilde{f}^{\cs}_1,f_{\min})$ & $\widetilde{f}^{\cs}_1$ & $f_2^{\cs}$ & \\
\hline
    \addlinespace
    \hline
    \multirow{4}{*}{\( 125 \)} 
        & None  & 5 & 5.955 & 0.625 & 0.252 & 468.635 & 24 \\    
\cline{2-8}
& $1.5$  & 0 & 5.890 & 0.873 & 0.208 & 468.294 & 11 \\
\cline{2-8}
& $1.25$ & 0 & 5.890 & 1.814 & 0.217 & 468.294 & 2 \\
\cline{2-8}
& $1.1$  & 0 & 5.890 & 3.387 & 0.216 & 468.294 & 0 \\
   \hline
\end{tabular}
\caption{Performance of $\hhcs$ for different values of the filtering parameter $\tau$, 
and randomly generated input data. The available local solution $\overline{\x}$ is not global, i.e., $f(\overline{\x}) > f_{\min}$, in $41$ out of $50$ cases.}
\label{tab:H2 - Q cs}
\end{table}
\noindent
As demonstrated in Table \ref{tab:H2 - Q cs}, $\hhcs$ appears to be capable of efficiently taking advantage of the information provided by available locally optimal solutions. Furthermore, we observe similar trade-offs as those observed in the absence of correlative sparsity, see Table \ref{tab:H2-Qcombined}. Significantly reduced gaps and a low number of unsuccessful cases further highlight an important observation: the locally optimal solutions provided by Ipopt are generally quite close to the globally optimal solutions. Generally speaking, since the first-order relaxation in these cases is not extremely loose, $\hhcs$ can be understood as assessing the quality of given locally optimal solutions.

\subsection{Beyond QCQPs}

Optimization problems analyzed earlier are quadratic and addressed using first-order (sparse) Moment-SOS relaxation, with our approaches effectively strengthening bounds. However, many POPs involve higher-degree polynomials (e.g., cubic or quartic), necessitating higher-order relaxations. We illustrate the integration of our approaches in such relaxations with the following example.
\\
\noindent 
Let ${\bf Q}\in\mathcal{S}^n, \q\in\RR^n$, $(r_1,r_2)\in\RR_+\times\RR_+$, $({\bf c}_1,{\bf c}_2)\in\RR^n\times\RR^n$. Let $g_1:\x\mapsto r_1^2-\|\x-{\bf c}_1\|^2_2$ and $g_2:\x\mapsto r^2_2-\|\x-{\bf c}_2\|^2_2$. Consider the following optimization problem:
\begin{equation}\label{unionballs}  
\min_{\x\in K} f(\x):= \x^\top {\bf Q} \x + \x^\top \q ,  
\end{equation} 
where $\displaystyle K=\set{\x\in\RR^n \: \mid \: -g_1(\x)g_2(\x)\geq0}$.
\hfill\break\\
Optimization problems of this form consist of minimizing a quadratic (nonconvex) function over a nonconvex set corresponding to the union of two Euclidian balls. This is a quartic optimization problem, as the feasible set is described using a polynomial of degree four. Consequently, the minimal order relaxation is the relaxation of order two.
\\
\noindent
We consider the following configuration: $n=5, (r_1,r_2)=(1.0,\sqrt{0.1}), ({\bf c}_1,{\bf c}_2)=({\bf 0},{\bf 1})$, and 
    $${\bf Q}=
\begin{bmatrix}
-1.4396 & -0.2259 &  0.0983 & -0.0085 & -2.3838 \\
-0.2259 &  0.8043 &  0.3730 &  1.2719 &  0.1370 \\
 0.0983 &  0.3730 & -1.0236 &  0.0597 &  0.5024 \\
-0.0085 &  1.2719 &  0.0597 &  0.9421 &  1.2085 \\
-2.3838 &  0.1370 &  0.5024 &  1.2085 &  0.7885
\end{bmatrix}, \quad \q = \begin{bmatrix}
    -1.269\\
-2.988\\
 2.535\\
-0.4151\\
 0.1464
\end{bmatrix}.
$$
For this particular POP, we have $f_{\min}=f_3=-5.7161$, and $f_2=-7.3367$, so that $\Delta(f_2,f_{\min})=22.90\%$. We also have $\x^{*}=( 0.6252,
  0.4015,
 -0.5397,
 -0.1415,
  0.3697)$.
\\
\noindent\textbf{Performance of $\HH$.} Let us denote by $\widetilde{f}_{d,d_c,k}$ the output of $\HH$ at iteration $k\geq1$, where $d_c\in\set{1,\dots,d}$ is the degree of Christoffel polynomials used for modifying the feasible set. Then, there are two possible ways in which our iterative approach can be coupled with high-order relaxations.
\begin{itemize}
    \item  Firstly, one can modify the feasible set using the Christoffel polynomials constructed from pseudo-moment matrices of order $d$. As the higher-order pseudo-moment matrices contain information about monomials of higher degrees, the resulting sublevel sets, possible nonconvex, are expected to be much more precise, a phenomenon that has already been described in Figure \ref{fig: supportidentification}.  Consequently, the number of iterations to reach satisfactory bound improvements should intuitively be smaller. For the optimization problem in \eqref{unionballs}, we obtain $\widetilde{f}_{2,2,5}=-6.7029$, yielding $\Delta(\widetilde{f}_{2,2,5},f_{\min})=14.72\%$.
    \item Secondly, from the pseudo-moment matrix of order $d$, one can extract a submatrix of order $d_c < d $, and modify the feasible set using less complicated level sets of low order Christoffel polynomials. This can be beneficial, for example, in high-dimensional settings, where construction of high-degree Christoffel polynomials can be challenging. In the particular case of \eqref{unionballs}, we get $\widetilde{f}_{2,1,5}=-6.6961$, so that $\Delta(\widetilde{f}_{2,1,5},f_{\min})=14.64\%$.
\end{itemize}
Our numerical experiments highlight the significant role of polynomials in the kernel of pseudo-moment matrices for $d \geq 2$. This should be carefully considered when $d_c = d$, as it requires an appropriate adjustment of $\beta$. For instance, second-order relaxation generally appears less accurate for problems with nonconvex and/or disconnected feasible sets, such as \eqref{unionballs}.
As a result, even second-order Christoffel polynomials may fail to accurately capture the true location of the minimizer, particularly when $\beta$ is set very close to zero. 
In contrast, selecting $d_c = 1$, appears to be a more conservative approach for reducing the size of the feasible set, typically yielding slower improvements in the bounds. Hence, without prior knowledge of higher-order relaxation quality, applying the bound-strengthening technique with $d_c = 1$ is a safer and effective approach. 
\\
\textbf{Performance of $\hh$.} We remind that $\hh$ is always applied using the marginal Christoffel polynomials with $d_c=1$. In case of higher-order relaxations, we expect the pseudo-moment matrices to contain more information, which, in turn, can increase the efficiency of $\hh$. The available local solution is $\overline{\x}=(1.2602, 0.9712, 0.9292, 0.8395, 1.0262)$. \\
After constructing the five marginal Christoffel polynomials, we obtain the following list of candidate thresholds (see \eqref{eq: lambda_i}) :
$$
(\lambda_1,\lambda_2,\lambda_3,\lambda_4,\lambda_5)=(1.2059, 2.5729, 4.2559, 8.3069, 1.5804).
$$
Hence, by setting $\tau=1.5$, the algorithm $\hh$ applies the feasible set reduction with respect to the first coordinate only, yielding a new bound $\widetilde{f}_{2,1}=-6.1883$, and a new optimality gap $\Delta(\widetilde{f}_{2,1},f_{\min})=7.63\%$. 
Using smaller values of the threshold filtering parameter $\tau$ leaves the initial relaxation bound $f_2$ unchanged. 

\section{Conclusion}
\label{sec:conclusion}
\noindent
To mitigate the computational complexity of high-order SOS-relaxations, our paper shows that it is possible to enhance the accuracy of low-order relaxations by leveraging Christoffel-Darboux kernels. Specifically, we propose two distinct algorithms aimed at obtaining tighter lower bounds at order $d$ of the hierarchy, exploiting only the information available at this order. 
\\
Our first approach, denoted as $\HH$, is an iterative procedure designed to enhance the bounds by systematically eliminating suboptimal solutions. In contrast, our second approach, $\hh$, focuses on leveraging the quality of available local solutions to efficiently reduce the size of the feasible set, thereby improving the lower bounds.\\
Our experimental results demonstrate the advantages of these two approaches, particularly for various types of QCQPs, where they often yield significantly improved lower bounds up to $1000\times$ faster than solving the computationally expensive higher-order relaxations.
We also introduced $\HHCS$ and $\hhcs$, which are refinements of $\HH$ and $\hh$, designed to exploit correlative sparsity. Despite the inherently sparse structure of many POP instances, ascending the hierarchy remains challenging. Nevertheless, our heuristic approaches consistently improve the bounds in these scenarios.
\\
In many real-life scenarios, access to tight lower bounds is of paramount importance. This motivates our belief that a more general study of bound-strengthening approaches, similar to ours, could significantly enhance the solving times of SDP relaxations. For instance, incorporating our heuristic bounds within a general-purpose Branch \& Bound solver could potentially reduce solving times by minimizing the number of necessary branchings. 
{A potential generalization of our work could involve developing an adaptive selection scheme for the threshold penalization factor in $\HH$ and $\HHCS$, the aim being to impose less stringent penalizations in scenarios where a reduction in the rank of the moment matrices is observed.} Finally,  extending our approaches to more general POP instances, such as those with discrete constraints, constitutes an interesting direction for future research.

}


\section*{Acknowledgements}
\noindent
This work was supported by the European Union’s HORIZON–MSCA-2023-DN-JD programme under under the Horizon Europe (HORIZON) Marie Skłodowska-Curie Actions, grant agreement 101120296 (TENORS), 
the AI Interdisciplinary Institute ANITI funding, through the French ``Investing for the Future PIA3'' program under the Grant agreement n° ANR-19-PI3A-0004 as well as by 
the National Research Foundation, Prime Minister’s Office, Singapore under its Campus for Research Excellence and Technological Enterprise (CREATE) programme. 
This work was partially performed using HPC resources from CALMIP (Grant 2023-P23035). 

\bibliographystyle{plain}

\newpage

\section*{Appendix}\label{sec:appendix}
\noindent
For the sake of completeness, we present updated versions of Algorithm \ref{alg:iteration-based-heuristic} and Algorithm \ref{alg:loc-sol-based-heuristic} that are designed to be compatible with the exploitation of correlative sparsity.
\begin{algorithm}[!htbp]
\caption{Implementing $\HHCS$}
\label{alg:iteration-based-heuristic_cd_modified}
\begin{itemize}[label={}]
\item {\bf Input:} Relaxation order $d$, gap tolerance $\delta > 0$, maximum number of iterations $N$, and a penalization factor $\varepsilon \in (0,1)$. 
\item {\bf Initialize:} $k=0$
\begin{enumerate}
\item Initialize the feasible set $\widetilde{K}_k=K$. 
\item Solve the \textit{sparse} moment relaxation of order $d$. Recover its optimal solution $(\y^{*}_{l, k})_{l\in\set{1,\dots,p}}$, relaxation bound $\widetilde{f}^{\cs}_{d,k}=f_d^{\cs}$, and a local solution $\overline{\x}_{k}$ yielding an upper bound $\operatorname{ub}_k=f(\overline{\x}_{k})$.
\item Initialize the relative gap $\Delta_k=\Delta\left(f(\overline{\x}_k),\widetilde{f}^{\cs}_{d,k}\right)$.\\
\end{enumerate}
\item {\bf While} $k<N$ 
\begin{enumerate}[resume]
    \item \textbf{If} $\widetilde{f}^{\cs}_{d,k} > \operatorname{ub}_k$ \textbf{or} $\Delta_{k} \leq \delta$:
    \begin{itemize}
        \item \textbf{Break}.
    \end{itemize}
    \item For each $l\in\set{1,\dots,p}$, compute the Christoffel polynomials $\widetilde{\Lambda}^{\y^*_{l,k}}_d$ and set $\gamma_{l,k} = L_{\y^{*}_{l,k}}\left( \widetilde{\Lambda}_d^{\y^*_{l,k}}\right)$.
    \item \label{alg: setModification_cs} Modify the feasible set via $\displaystyle\widetilde{K}_{k+1}=\widetilde{K}_k\cap\bigcap_{l\in\set{1,\dots,p}}\widetilde{S}_d(\y^{*}_{l,k},(1-\varepsilon)\gamma_{l,k})$, and solve the \textit{sparse} moment relaxation of order $d$. Recover the new relaxation bound $\widetilde{f}^{\cs}_{d,k+1}$, the new optimal solution $(\y^{*}_{l,k+1})_{l\in\set{1,\dots,p}}$, and the new locally optimal solution $\overline{\x}_{k+1}$.  
    \item Set $\operatorname{ub}_{k+1}=\displaystyle\min_{i\in\set{0,k+1}}\set{f(\overline{\x}_{i})}$ and re-evaluate the gap $\Delta_{k+1}=\Delta\left(\operatorname{ub}_{k+1}, \widetilde{f}^{\cs}_{d,k+1} \right)$.
    \item Update the iteration count $k=k+1$.
\end{enumerate}
\item \textbf{Output}: Sequence of bounds $\left(\widetilde{f}^{\cs}_{d,k}\right)_{k\geq1}$ satisfying $\widetilde{f}^{\cs}_{d,k+1}\geq \widetilde{f}^{\cs}_{d,k}\geq f_d^{\cs}$.
\end{itemize}
\end{algorithm}
\begin{algorithm}[H]
\caption{Implementing $\hhcs$}
\label{alg:loc-sol-based-heuristic - CS}
\begin{itemize}[label={}]
\item {\bf Input:} Relaxation order $d$, parameter $\tau > 1$.
\begin{enumerate} 
\item Solve the \textit{sparse} moment relaxation of order $d$. Recover its optimal solution $(\y^*_l)_{l\in\set{1,\dots,p}}$ and a local solution $\overline{\x}$.
\item For each $l\in\set{1,\dots,p}$, and for each $i\in I_l$, construct the marginal Christoffel polynomials $\widetilde{\Lambda}_1^{\y^{*}_{l,[i]}}$ and compute the thresholds $\gamma_i=\widetilde{\Lambda}_1^{\y^{*}_{l,[i]}}(\overline{x}_i)$.
\item Modify the feasible set via $\widetilde{K} = K \cap \displaystyle\bigcap_{\substack{l \in \{1, \dots, p\} \\ i \in I_l}} \{\x \in \mathbb{R}^n \mid x_i \in \widetilde{S}_1(\y^{*}_{l,[i]}, \gamma_i)\}$, and solve the \textit{sparse} moment relaxation of order $d$.
\end{enumerate}
\item \textbf{Output}: Tightened bound $\widetilde{f}_{d}^{\cs}\geq f_d^{\cs}$.
\end{itemize}
\end{algorithm}
\noindent
\end{document}